\newcommand{\real}{\mathbb{R}}
\newcommand{\oreal}{\overline{\mathbb{R}}}
\DeclareMathOperator*{\argmax}{arg\,max}
\DeclareMathOperator*{\argmin}{arg\,min}
\newtheorem{definition}{Definition}
\newtheorem{theorem}{Theorem}
\newtheorem{remark}{Remark}
\newtheorem{proposition}{Proposition}
\newtheorem{example}{Example}
\newtheorem{assumption}{Assumption}
\begin{document}

\title{Duality for Non-Convex Composite Functions via the Fenchel–Rockafellar Perturbation Framework\textsuperscript{\textdagger}
}{
}

\author*[1]{\fnm{Vittorio} \sur{Latorre}}\email{vittorio.latorre@unimercatorum.it}
\affil*[1]{\orgdiv{Department of Engineering and Science}, \orgname{Universitas Mercatorum}, \orgaddress{\street{Piazza Mattei 10}, \city{Rome}, \postcode{00186}, \state{Lazio}, \country{Italy}}}

\abstract{
We examine the duality theory for a class of non-convex functions obtained by composing a convex function with a continuous one. Using Fenchel duality, we derive a dual problem that satisfies weak duality under general assumptions.
To better understand this duality, we compare it with classical Lagrange duality by analyzing a related, yet more complex, constrained problem. We demonstrate that the newly derived stationarity conditions are equivalent to the classical Lagrange stationarity conditions for the constrained problem, as expected by the close relationship between Fenchel and Lagrange dualities.

We introduce two non-convex optimization problems and prove strong duality results with their respective duals. The second problem is a constrained optimization problem whose dual is obtained through the concurrent use of the duality theory introduced in this paper and classical Lagrange duality for constrained optimization.
We also report  numerical tests where we solve randomly generated instances of the presented problems using an ad-hoc primal-dual potential reduction interior point method that directly exploits the global optimality conditions established in this paper. The results include a comparison with a well-known conic programming solver applied to the convex duals of the analyzed problems. The interior point method successfully reduces the duality gap close to zero, validating the proposed duality framework.
The theory presented in this paper can be applied to various non-convex problems and serves as a valuable tool in the field of hidden convex optimization.}

\keywords{ Duality Theory, Global Optimization, Non-Convex Optimization, Primal-Dual Methods
}
\begingroup
  \renewcommand\thefootnote{\fnsymbol{footnote}}
  \footnotetext[2]{Preprint version (October 2025). This manuscript presents the full set of theoretical and numerical results, but it is still undergoing polishing and revision before journal submission.}
\endgroup

\maketitle

\section{Introduction}\label{Introduction}
In this paper, we analyze the properties of the following class of composite functions \( f:C\subseteq\mathbb{R}^n \to \overline{\mathbb{R}} \):
\[
f(x) = (V \circ \Phi)(x) = V(\Phi(x)),
\]
where \( \overline{\mathbb{R}} = [-\infty, \infty] \) denotes the extended real number line. The function \( V \) and the mapping \( \Phi \) satisfy the following conditions:
\begin{itemize}
    \item \( V:\mathbb{R}^m \to \overline{\mathbb{R}} \) is proper, convex and lower semicontinuous (lsc);
    \item \( \Phi:\mathbb{R}^n \to \mathbb{R}^m \) is continuous, and nonlinear.
\end{itemize}
A function is called \emph{proper} if it has a non-empty domain, never attains the value \( -\infty \) in its domain, and is not identically equal to \( +\infty \).

We refer to such composite functions as \emph{Implicit Convex Functions}, a term we introduce in this paper since although $f$ is generally non-convex, it inherits some key convex duality properties, in the sense formalized by Rockafellar’s conjugate duality \cite{rock_con,rock}.
As a matter of fact, we show that if the following optimization problem is considered:
\begin{equation}\label{eq: prim0}
\min_x f(x),
\end{equation}
Where \( f \) is an implicit convex function, the parametric perturbation approach in variational analysis can be used to define a Lagrange function and an associated dual problem. Despite the non-convexity of \( f \), this leads to a primal-dual relationship between points in the primal and dual spaces via subdifferential conditions, in a manner analogous to classical Fenchel conjugate duality. Moreover, weak duality holds between the primal and dual problems, and in certain specific cases, strong duality can be established, leading to global optimality conditions.

Another reason for our interest in implicit convex functions is that the unconstrained optimization problem (\ref{eq: prim0}) is closely related to the following constrained optimization problem:
\begin{equation}\label{eq: primc0}
\begin{array}{cl}
\displaystyle \min_{x, y} & V(y) \\
\text{s.t.} & \Phi(x) = y
\end{array}.
\end{equation}
It is easy to observe that Problem (\ref{eq: primc0}) is generally more computationally expensive than Problem (\ref{eq: prim0}), as handling a non-convex feasible set is typically more challenging than optimizing a non-convex objective function. 
We demonstrate that the optimality conditions of Problem (\ref{eq: prim0}) are equivalent to the Lagrangian optimality conditions of Problem (\ref{eq: primc0}). Consequently, if an efficient method for solving Problem (\ref{eq: prim0}) is found—for instance, by leveraging information from its dual problem—then the obtained solution is also optimal for Problem (\ref{eq: primc0}).
 

Convexity is a highly desirable property in optimization, as it guarantees that any local optimal solution is also globally optimal, thereby significantly simplifying the optimization process. However, convexity arises in only a small subset of real-world optimization problems. 
As a result, even the early researchers sought  to extend convex analysis techniques beyond convex settings, aiming to preserve, to some extent, properties that facilitate efficient solution methods. Such extensions are evident in several examples presented in \cite{rock_con}, including cases that venture into non-convex programming.


In particular, this work can be seen as a further application of the general parametric perturbation framework developed by Rockafellar \cite{rock_con}.  As such, the theoretical results developed in this paper offer a principled approach for analyzing the considered class of non-convex problems within the broader context of conjugate duality.

We begin our analysis by defining the Lagrangian function that incorporates the convex conjugate of \( V \). Using this function, we establish a fundamental conjugate subgradient pair relationship and a Fenchel-type inequality for implicit convex functions, establishing a primal-dual optimal pair in the Fenchel sense even in such non-convex setting. By following the standard procedures in conjugate duality, we define a dual function and then identify a subset of the dual space to define the dual problem. 

After laying the foundations of duality for implicit convex functions, we consider the constrained problem in (\ref{eq: primc0}) and use its Lagrangian function to prove the equivalence between the stationary points of the two problems. This equivalence enables us to identify stationary points of the constrained problem without explicitly handling its non-convex feasible set. Furthermore, we establish a saddle point theorem analogous to that of classical Lagrangian duality.

We then focus on strong duality results for two optimization problems involving implicit convex functions, formulating the duals of both problems as convex quadratic semidefinite programs. 
The second problem features a non-convex objective function over a non-convex feasible set and serves as an example of how duality for implicit convex functions can be applied in a constrained setting.
Finally, we present the numerical experiments conducted on randomly generated instances of the two analyzed problems. To this end, we introduce a primal-dual potential reduction method that leverages the theory and optimality conditions presented in this paper, and compare its performance with that of the conic solver SDPT3 \cite{sdpt1, sdpt2}.

Our main contributions are therefore as follows:
\begin{itemize}
    \item The analysis of implicit convex functions within the Fenchel–Rockafellar framework, identifying a new class of non-convex problems for which meaningful duality relations can be established;
    \item A discussion of the optimality conditions for the newly introduced class of functions, in relation to those of an equivalent constrained formulation addressed via classical Lagrange duality;
    \item Strong duality results for two non-convex optimization problems: the first being unconstrained, and the second constrained, illustrating the applicability of the proposed approach;
    \item Numerical experiments comparing a new primal-dual algorithm with a standard conic solver, validating the theoretical results.

\end{itemize}
We also aim to clarify the relationship between the duality approaches used in \cite{lg14, lg15, lat-gao-chaos, snl19, ls14} and classical conjugate duality theory, in order to highlight new insights into such problems.
Throughout the paper, we rely on foundational results from convex analysis and conjugate duality, particularly those presented in \cite{rock_con, rock}. We also refer to \cite{bvcon} for standard formulations of duality in convex optimization, which serve as a basis for comparison.


The paper is organized as follows: In the next section, we present the general framework for implicit convex functions, defining the associated Lagrangian function and dual problem. We also illustrate the connections between the Problems (\ref{eq: prim0}) and  (\ref{eq: primc0}). In Section 3, we present strong duality results for two specific optimization problems. In Section 4, we describe the strategies used to solve the instances of the problems analyzed in Section 3 and present the numerical results. Finally, in Section 5, we draw the conclusions.

{\bf Notations} Throughout this paper, we use $\mathbb{R}^n$ to denote the $n$-dimensional real space, with $\mathbb{R}^n_+$ indicating the positive orthant in $\mathbb{R}^n$, $\overline{\mathbb{R}} = [-\infty, \infty]$ as the extended real number line, and $\mathbb{R}^{n \times m}$ representing the space of $n \times m$ matrices. 
Given a function $g : \mathbb{R}^n \to \mathbb{R}^m$, we denote its subdifferential as $\partial g$. 
With $\langle \cdot, \cdot \rangle$ we denote the inner product of a vector space. We use $\text{Dom}(f)$ to refer to the domain of the function $f$, and ${\cal R}(\cdot)$ to denote the range of either a function or a matrix. Upper-case letters represent matrices. 
We use $S^n$ to denote the space of $n \times n$ symmetric matrices, and $S_+^n$ to denote the set of $n \times n$ positive-semidefinite matrices. $I_n$ denotes the $n \times n$ identity matrix, and $0_n$ denotes the vector in $\mathbb{R}^n$ composed of all zero entries. 
The expression $A \succeq 0$ means that matrix $A$ is positive-semidefinite, and $A \succeq B$ means that $A - B \succeq 0$. 
$A^\dagger$ indicates the Moore-Penrose generalized inverse of $A$, and $\text{tr}(A)$ denotes the trace of matrix $A$. For a matrix $A \in \mathbb{R}^{n \times n}$, $vec(A)$ denotes the vector in $\mathbb{R}^{n^2}$ obtained by concatenating the columns of $A$.
For $\argmin$ and $\argmax$, we follow the well-known definitions in \cite{rock09}.

\section{Problem Definition and General Properties}
In this section, we first present the well-known definition of the convex conjugate of a function (see, for example, \cite{rock09, rock}), which serves as the foundation for the theoretical results of this paper. 
In the first subsection, we analyze the properties of the minimization problem for an implicit convex function. Through the parametric perturbation approach we are able to introduce the Lagrange function associated with the problem and define the dual problem. Weak duality between the primal and dual  is established by exploiting this well-known framework. Furthermore, we able to prove that the subgradient conjugate relations and the Fenchel-Young equality condition for convex conjugacy hold in the considered case as well.  

In the second subsection, we compare the described duality theory  with the classical Lagrange duality theory for constrained optimization. We consider both problems (\ref{eq: prim0}) and (\ref{eq: primc0}), proving the correspondence between their stationarity conditions. Finally, we provide a Lagrangian saddle point theorem that offers global optimality conditions for a primal solution.

To simplify the notation and align with standard duality theory (as in \cite{rock_con}), we extend the function \( f:C\rightarrow\real \) to all of \( \mathbb{R}^n \) by setting:
\begin{equation}\label{eq: extention}
f(x) := \begin{cases}
V(\Phi(x)) & \text{if } x \in C, \\
+\infty & \text{otherwise}.
\end{cases}
\end{equation}
This convention allows us to write unconstrained minimization problems over \( \mathbb{R}^n \), with the constraint \( x \in C \) being implicitly encoded in the function itself.

We start with the definition of the conjugate of a convex function:
\begin{definition}\label{def: concon}
Let $V:\real^m\rightarrow\oreal$ be a proper, convex and lower semicontinuous (lsc) function with effective domain:
$$
\operatorname{dom}(V) := \{ y \in \mathbb{R}^m \mid V(y) < +\infty \}.
$$
Then its convex conjugate is the function $V^*: D\subseteq \real^m\rightarrow \real$ defined by
$$
V^*(\sigma)=\sup_{y\in \real^m}\{\langle y,\sigma\rangle- V(y) \},
$$
where $\sigma\in D\subseteq \real^m$ is the vector of dual variables and $D$ is the dual domain defined as:
$$
D=\{\sigma\in\real^m :  \sup_{y\in \real^m}\{\langle y,\sigma\rangle- V(y) \}<\infty\}.
$$ 
\end{definition}

In the paper, we only consider the conjugates of convex functions, and therefore Definition \ref{def: concon} corresponds to the definition of the Legendre transform in convex analysis. From the results in the literature \cite{rock}, it follows that under the considered assumptions, the function $V^*$ is also proper, convex and lsc, and  the conjugate of $V^*$ is $V$ itself. The other properties of convex functions and their conjugates that we use in the proofs of this paper are reported in Proposition \ref{the: convprop} of Appendix \ref{ap: proof}.

\subsection{Definition of the Dual Problem and Weak Duality}
We start the analysis with the formal definition of Implicit Convex Functions:

\begin{definition}\label{def: icf}We say that the function $f: \real^n \rightarrow \bar{\mathbb{R}}$ is an {\bf Implicit Convex Function} if it can be expressed as:
\[
f(x) := \begin{cases}
V(\Phi(x)) & \text{if } x \in C, \\
+\infty & \text{otherwise},
\end{cases}
\]
where $C \subseteq \mathbb{R}^n$ is a non-empty convex set, $V: \mathbb{R}^m \rightarrow \bar{\mathbb{R}}$ is  proper, convex, and lsc, and $\Phi: \real^n \rightarrow \mathbb{R}^m$ is a nonlinear continuous mapping.
\end{definition}

We assume \(\Phi\) is nonlinear, because the case of affine or linear compositions is already fully captured by classical conjugate duality theory (see for instance, in Example 2 of Chapter 2 of \cite{rock_con}).  Our aim is to generalize beyond this structure by allowing nonlinear mappings \(\Phi\).
\begin{example}
A simple example of an implicit convex function is the well-known Gaussian function $f(x) = e^{-x^2}$. Even though $f(x)$ is non-convex, it can be written as the composition of $V(y) = e^{-y}$, which is proper, convex, and lsc, with $\Phi(x) = x^2$, which is non-linear and continuous.  
\end{example}
\noindent

We consider the following optimization problem, referred to as the \emph{primal problem}:
\begin{equation}\label{eq: prim}
\min_{x \in \real^n} f(x) = V(\Phi(x)).
\end{equation}

To develop a dual formulation, we follow the parametric perturbation approach of Rockafeller (Chapter 4, \cite{rock_con}) and define a perturbed problem:
\[
F(x, u) := V(\Phi(x) - u), \quad u \in \mathbb{R}^m,
\]
where $u$ is the vector of perturbation parameters. This yields a perturbation function \( F: \real^n \times \mathbb{R}^m \to \bar{\mathbb{R}} \), such that when the perturbation parameter $u$ is set to zero, we recover the original problem:
\[
f(x) = F(x, 0),
\]
which ensures that  \( F \) is a valid perturbation function.

As per chapter 4 in \cite{rock_con}, the associated Lagrangian function with Problem (\ref{eq: prim}) is defined as:
\[
L(x, \sigma) := \inf_{u \in \mathbb{R}^m} \left\{ F(x, u) + \langle u, \sigma \rangle \right\}
= \inf_{u \in \mathbb{R}^m} \left\{ V(\Phi(x) - u) + \langle u, \sigma \rangle \right\},
\]
where $\sigma \in\real^m$ is the dual variable in the dual space $D\subseteq \real^m$ and $\mbox{Dom}(L) = C \times D$.

We set the perturbation parameters to be \( u = \Phi(x) - y \), so that \( y = \Phi(x) - u \), and the infimum becomes:
\begin{equation}\label{eq: lag_proto}
L(x, \sigma) = \inf_{y \in \mathbb{R}^m} \left\{ V(y) + \langle \Phi(x) - y, \sigma \rangle \right\}
= \langle \Phi(x), \sigma \rangle - \sup_{y \in \mathbb{R}^m} \left\{ \langle y, \sigma \rangle - V(y) \right\}.
\end{equation}
The second term in (\ref{eq: lag_proto}) is the Fenchel conjugate of \( V \), so the Lagrangian simplifies to:
\begin{equation}\label{eq: aux}
L(x, \sigma) = \langle \Phi(x), \sigma \rangle - V^*(\sigma).
\end{equation}
As with  function $f$, we extend  $L$ on the entire domain $\real^n\times\real^m$ in the following way:
\[
L(x, \sigma) := 
\begin{cases}
\langle \Phi(x), \sigma \rangle - V^*(\sigma) & \text{if } x \in C,\ \sigma \in D, \\
-\infty & \text{if } x\in C, \sigma\notin D,\\
+\infty & \text{if } x\notin C, \sigma\in D.
\end{cases}
\]
This convention ensures that infeasible variables are penalized in the min–max formulation, while \( L \) remains proper on the product domain \( C \times D\).

\begin{example}
For the Gaussian function $f(x) = e^{-x^2}$, the convex conjugate of the function $V(y) = e^{-y}$ is given by:
$$
V^*(\sigma) = -\sigma (\log(-\sigma) - 1),
$$
where the dual space is $D = \{\sigma \in \real : \sigma \le 0\}$. Therefore, the Lagrangian function is:
$$
L(x, \sigma) = \sigma x^2 + \sigma (\log(-\sigma) - 1).
$$
\end{example}

By the assumptions on \( V \) in Definition \ref{def: icf}, its Fenchel conjugate \( V^* \) is proper, convex, and lower semicontinuous. Moreover, computing \( V^* \) is significantly simpler than in the general non-convex case, since the supremum
\[
\sup_{y \in \mathbb{R}^m} \left\{ \langle y, \sigma \rangle - V(y) \right\}
\]
is well-posed and typically admits a closed-form or a easily computable expression when \( V \) is convex. In addition, since \( V \) is convex and lower semicontinuous, we also have \( V^{**} = V \).
It is important to emphasize that these properties do not follow from general conjugate duality theory, but instead arise from the specific structural assumptions in Definition \ref{def: icf}. This is what makes the class of implicit convex functions particularly amenable to dual analysis, and allows us to establish the following result,  whose proof can be found in Appendix \ref{ap: proof}, generalizing some of the key properties of convex functions and their conjugates to the considered case:

\begin{restatable}{theorem}{fen}\label{the: fen_in}
Let $f: \real^n \rightarrow \oreal$ be an implicit convex function according to Definition \ref{def: icf} and let $\bar{x}\in\real^n$ be a point such that $\Phi(\bar{x})$ is in the effective domain of $V$. Then there exists a point $\bar{\sigma}\in\real^m$ such that $\bar{\sigma}$ is a subgradient of $V$ in $\Phi(\bar {x})$ and the following relations always hold: 

\begin{equation}\label{eq: fenrel1}
\Phi(\bar{x}) \in \partial V^*(\bar{\sigma}) 
\Longleftrightarrow  
\bar{\sigma} \in \partial V(\Phi(\bar{x}))  
\Longleftrightarrow
V(\Phi(\bar{x})) + V^*(\bar{\sigma}) - \langle \Phi(\bar{x}), \bar{\sigma} \rangle = 0,
\end{equation}
Moreover we have $\partial V^*=(\partial V)^{-1}$ and $\partial V=(\partial V^*)^{-1}$.
\end{restatable}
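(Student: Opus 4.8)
The plan is to reduce the whole statement to the classical conjugacy calculus for the convex function $V$ alone, evaluated at the fixed point $y := \Phi(\bar x)$; the mapping $\Phi$ enters only through the hypothesis that this point lies in $\operatorname{dom}(V)$, and plays no further role in the argument. First I would invoke the relevant items of Proposition \ref{the: convprop}: since $V$ is proper, convex and lsc, one has the Fenchel--Young inequality $V(y) + V^*(\sigma) \ge \langle y, \sigma \rangle$ for all $y,\sigma \in \real^m$, the biconjugation identity $V^{**} = V$, and the nonemptiness of $\partial V(y)$ whenever $y$ lies in the relative interior of $\operatorname{dom}(V)$. Applying the last fact at $y = \Phi(\bar x) \in \operatorname{dom}(V)$ produces a vector $\bar\sigma$ with $\bar\sigma \in \partial V(\Phi(\bar x))$, which settles the existence claim.

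Next I would establish the chain of equivalences in (\ref{eq: fenrel1}). The equivalence between $\bar\sigma \in \partial V(\Phi(\bar x))$ and $V(\Phi(\bar x)) + V^*(\bar\sigma) - \langle \Phi(\bar x), \bar\sigma \rangle = 0$ is exactly the equality case of the Fenchel--Young inequality for $V$, again taken from Proposition \ref{the: convprop}. For the remaining equivalence, I would rewrite the equality $V(\Phi(\bar x)) + V^*(\bar\sigma) = \langle \Phi(\bar x), \bar\sigma \rangle$ using $V = V^{**}$ as $V^*(\bar\sigma) + V^{**}(\Phi(\bar x)) = \langle \bar\sigma, \Phi(\bar x) \rangle$; this is the equality case of Fenchel--Young applied to the proper convex lsc function $V^*$ together with its conjugate $V^{**} = V$, hence it is equivalent to $\Phi(\bar x) \in \partial V^*(\bar\sigma)$. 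Concatenating the two equivalences gives the displayed chain.

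For the ``moreover'' part, I would note that the equivalence $\sigma \in \partial V(y) \Longleftrightarrow y \in \partial V^*(\sigma)$ established above is valid for arbitrary $y, \sigma \in \real^m$, since the particular point $\Phi(\bar x)$ was never used in deriving it. By the definition of the inverse of a set-valued mapping, this is precisely the statement $\partial V^* = (\partial V)^{-1}$; interchanging the roles of $V$ and $V^*$ and using $V^{**} = V$ once more yields $\partial V = (\partial V^*)^{-1}$.

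I expect the only delicate point to be the existence of $\bar\sigma$: nonemptiness of $\partial V$ holds on the relative interior of $\operatorname{dom}(V)$ but can fail at boundary points, so the hypothesis ``$\Phi(\bar x)$ in the effective domain of $V$'' should be read in that slightly stronger sense, or complemented by a mild regularity condition (for instance $\operatorname{dom}(V) = \real^m$, or continuity of $V$ at $\Phi(\bar x)$). Beyond this, the result is a direct transcription of standard Fenchel--Rockafellar conjugacy and I anticipate no genuine obstacle.
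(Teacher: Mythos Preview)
Your proposal is correct and follows essentially the same approach as the paper: set $\bar y = \Phi(\bar x)$, invoke the standard Fenchel--Young/biconjugation relations for the convex function $V$ (the paper simply cites Proposition~\ref{the: convprop} rather than unpacking the equivalences as you do), and substitute back. You are in fact more careful than the paper on the existence of $\bar\sigma$: the paper asserts ``the subdifferential $\partial V(\bar y)$ is non-empty'' directly from $\bar y \in \operatorname{dom}(V)$, whereas you rightly flag that this requires $\bar y$ to lie in the relative interior (or some equivalent regularity), a subtlety the paper glosses over.
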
 
Theorem \ref{the: fen_in} characterizes a fundamental conjugate subgradient pair relationship between the value $\Phi(\bar{x})$ and the subgradient $\bar{\sigma}$, within the Fenchel duality framework. This relationship is guaranteed by the properness, convexity, and lower semi-continuity of function $V$. 
Furthermore, the relations in (\ref{eq: fenrel1}) are related to the optimality conditions for minimizing the composite function $f(x)=V(\Phi(x))$. Specifically, if $\bar{x}$ is a minimizer of $f(x)$, the condition $0 \in \partial f(\bar{x})$  often implies, depending on the properties of $\Phi$, the existence of a $\bar{\sigma} \in \partial V(\Phi(\bar{x}))$ such that the conditions in (\ref{eq: fenrel1}) are met, establishing a primal-dual optimal pair in the Fenchel sense.

We now define the dual function \( g \) and the associated dual problem.

\begin{definition}\label{def: dual}
The \emph{dual function} is defined as
\[
g(\sigma) := \inf_{x \in C} L(x, \sigma) = \inf_{x \in C} \left\{ \langle \Phi(x), \sigma \rangle - V^*(\sigma) \right\},
\]
\end{definition}

The dual function $g: \mathbb{R}^m \rightarrow \bar{\mathbb{R}}$ is an extended-real-valued function. Its effective domain  is $D$,
 the same effective domain of the Fenchel conjugate $V^*$ as reported in Definition \ref{def: concon}. For $\sigma \notin D$, $g(\sigma)$ takes the value $-\infty$. It is easy to show that weak duality holds between $f$ and $g$:

\begin{theorem}[Weak Duality]\label{th: wd1}
Let \( f \) be an implicit convex function as defined in Definition \ref{def: icf}, and let \( g(\sigma) \) be the dual function defined above. Then for any \( x \in C \), \( \sigma \in D \), it holds that:
\[
g(\sigma) \leq f(x).
\]
\end{theorem}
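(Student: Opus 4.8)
The statement is the standard weak-duality inequality, and I expect it to follow in two short steps from (i) the definition of $g$ as an infimum and (ii) the Fenchel--Young inequality attached to the conjugate $V^*$. The plan is to first fix $\sigma \in D$ and $x \in C$ and observe that, directly from Definition \ref{def: dual},
\[
g(\sigma) = \inf_{x' \in C} L(x', \sigma) \leq L(x, \sigma) = \langle \Phi(x), \sigma \rangle - V^*(\sigma),
\]
since the infimum over $x' \in C$ is no larger than the value at the particular point $x$. This is immediate and requires nothing beyond the definition.

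The second step is to bound $L(x,\sigma)$ by $f(x)$. Here I would invoke the Fenchel--Young inequality, which is an immediate consequence of Definition \ref{def: concon}: taking $y = \Phi(x)$ in the supremum defining $V^*$ gives
\[
V^*(\sigma) \;=\; \sup_{y \in \real^m}\{\langle y, \sigma\rangle - V(y)\} \;\geq\; \langle \Phi(x), \sigma\rangle - V(\Phi(x)),
\]
and rearranging yields $\langle \Phi(x), \sigma\rangle - V^*(\sigma) \leq V(\Phi(x))$, i.e. $L(x,\sigma) \leq f(x)$. Chaining this with the first step gives $g(\sigma) \leq L(x,\sigma) \leq f(x)$, which is the claim.

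The only point deserving a word of care is extended-real arithmetic in the degenerate cases. If $\Phi(x) \notin \operatorname{dom}(V)$ then $f(x) = V(\Phi(x)) = +\infty$ and the inequality holds trivially; otherwise $V(\Phi(x))$ is finite because $V$ is proper (it never takes the value $-\infty$ on its domain), and since $\sigma \in D$ the quantity $V^*(\sigma)$ is finite as well, so the rearrangement above is legitimate. The restriction to $\sigma \in D$ and $x \in C$ in the statement is exactly what rules out the problematic $-\infty/+\infty$ combinations in the extended definition of $L$. There is no substantive obstacle in this proof; it is a direct transcription of the classical weak-duality argument into the implicit-convex setting, with the nonlinearity of $\Phi$ playing no role since $\Phi(x)$ enters only as a fixed point of $\operatorname{dom}(V)$.
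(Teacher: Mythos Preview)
Your proof is correct and essentially identical to the paper's: both obtain $g(\sigma)\le L(x,\sigma)$ from the infimum definition and then $L(x,\sigma)\le f(x)$ from Fenchel--Young, the paper phrasing the latter as ``$u=0$ is a feasible choice'' in the perturbation infimum, which is the same inequality read through the substitution $u=\Phi(x)-y$. Your added remarks on extended-real cases are a welcome clarification but do not change the argument.
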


\begin{proof}
By definition of \( g \), we have:
\[
g(\sigma) = \inf_{x \in C} L(x, \sigma) \leq L(x, \sigma) \quad \forall x \in C.
\]
Also, from the derivation above:
\[
L(x, \sigma) = \inf_u \left\{ V(\Phi(x) - u) + \langle u, \sigma \rangle \right\} \leq V(\Phi(x)) = f(x),
\]
since \( u = 0 \) is a feasible choice. Therefore:
\[
g(\sigma) \leq L(x, \sigma) \leq f(x). \qedhere
\]
\end{proof}

To formulate a well-posed dual problem that leverages specific convexity properties for the primal variable $x$, we consider a restricted dual domain $D^+$ where the Lagrangian $L(x,\sigma)$ exhibits convexity with respect to $x$. Therefore, we define:
$$
D^+ := \left\{ \sigma \in D \mid L(x,\sigma) \text{ is convex in } x \right\}.
$$
The dual problem associated with Problem (\ref{eq: prim}) can then be formulated as:
\begin{equation}\label{eq: dual}
\begin{array}{cl}
\displaystyle \max_\sigma & g(\sigma)\\
\text{s.t.} & \sigma \in D^+\\
\end{array}.
\end{equation}

\begin{remark}
The shape and properties of the set \(D\) depend on the function \(V\), while the shape and properties of the set \(D^+\) depend on the function \(\Phi\). Since \(V(\Phi(x))\) is non-convex, the set \(D^+\) can exhibit the following behaviors:
\begin{itemize}
    \item \textbf{Non-convex:} In this case, the dual problem is non-convex as well. However, weak duality still holds, meaning that solving the dual problem provides a valid lower bound for the primal problem. Despite its non-convexity, the dual problem can still offer insights into the structure of the primal problem and its optimality conditions.
    \item \textbf{Empty:} In general, in convex optimization, if the dual problem is infeasible, then the primal problem may fail to attain a finite optimal solution. The emptiness of \( D^+ \) can indicate that the primal is ill-posed or lacks a well-defined minimum, potentially due to the absence of coercivity or regularity conditions. This is particularly relevant in non-convex settings, where the primal problem may admit local minima, but no global solutions because  it is unbounded from below.
In such cases, practitioners may mistakenly attempt to solve the dual problem and misinterpret the absence of dual feasibility as a numerical failure, rather than a structural property of the primal. Analyzing the feasibility of the dual thus provides important diagnostic information about the soundness of the primal formulation.

\end{itemize}
\end{remark}

\begin{example}
The Lagrangian function associated with the Gaussian function is:
$$
L(x,\sigma)= \sigma x^2+\sigma (\log(-\sigma)-1).
$$
Since  only the quadratic term in the Lagrangian function depends on $x$, the infimum \(\inf_{x\in \real^n} L(x,\sigma)\) is reached at \( x=0 \), and the dual function is:
$$
g(\sigma)= \sigma (\log(-\sigma)-1).
$$
For what regards the dual set $D^+$, the Lagrangian is convex in \(x\) only for  \(\sigma=0\) and therefore $D^+=\{0\}$. It is easy to notice that the value of the dual function in $\sigma=0$ is equal to zero, and this is coherent with the infimum value of $f(x)=e^{-x^2}$ that is also equal to $0$, which is approached as $x\rightarrow \pm \infty$, resulting in:
$$
f(x)\ge g(\sigma), \quad \forall \; x \mbox{ and } \sigma.
$$
If we add a quadratic regularization term \( \beta x^2 \) with \(\beta > 0\)  to the primal problem, a well-known practice in many applications involving the Gaussian function, we obtain:
$$
\min_x f(x)+\beta x^2=e^{-x^2}+\beta x^2.
$$
This regularization term is not part of the implicit convex structure used in the transformation, but it modifies the Lagrangian by directly affecting its convexity with respect to \( x \). Therefore, the Lagrangian function associated with the minimization problem becomes:
$$
L(x,\sigma)= (\sigma+\beta) x^2+\sigma (\log(-\sigma)-1).
$$
By the addition of the quadratic regularization term, the dual feasible set changes to:
$$
D^+ := \{\sigma\in\mathbb{R}: -\beta\le\sigma\le 0\}.
$$
So that the dual problem is:
$$
\begin{array}{cc}
\max &\sigma (\log(-\sigma)-1)\\
\text{s.t.} & -\beta\le\sigma\le 0
\end{array}.
$$
We have the following cases:
\begin{enumerate}
\item If \( \beta \ge 1 \), the primal function attains its minimum at \( x^*=0 \) with \( f(x^*)=1 \), while in the dual problem, the maximum is reached at \( \sigma^*=-1 \) with \( g(\sigma^*)=1 \).
\item If \( 0<\beta<1 \), the primal function has two minimum points: \( x^*_{1/2}=\pm \sqrt{-\log(\beta)} \), with \( f(x_1^*)=f(x_2^*)=\beta(1-\log(\beta)) \). In the dual problem, the maximum is reached at the boundary of the feasible set, with \( \sigma^*=-\beta \) and \( g(\sigma^*)=\beta(1-\log(\beta)) \).
\end{enumerate}

This example demonstrates that:
\begin{itemize}
\item Weak duality holds between the primal and the dual problems; 
\item There is no duality gap between the primal optimal solution and the dual optimal solution when the  quadratic regularization term is introduced.
\end{itemize}
A practical example with $\beta=0.5$ is presented in Figure \ref{fig:dual_primal_plot}. We can see that at the optimum both the primal and the dual functions have the same values showing that strong duality holds.
\end{example}

\begin{figure}[htbp]
    \centering
    \includegraphics[width=1\textwidth]{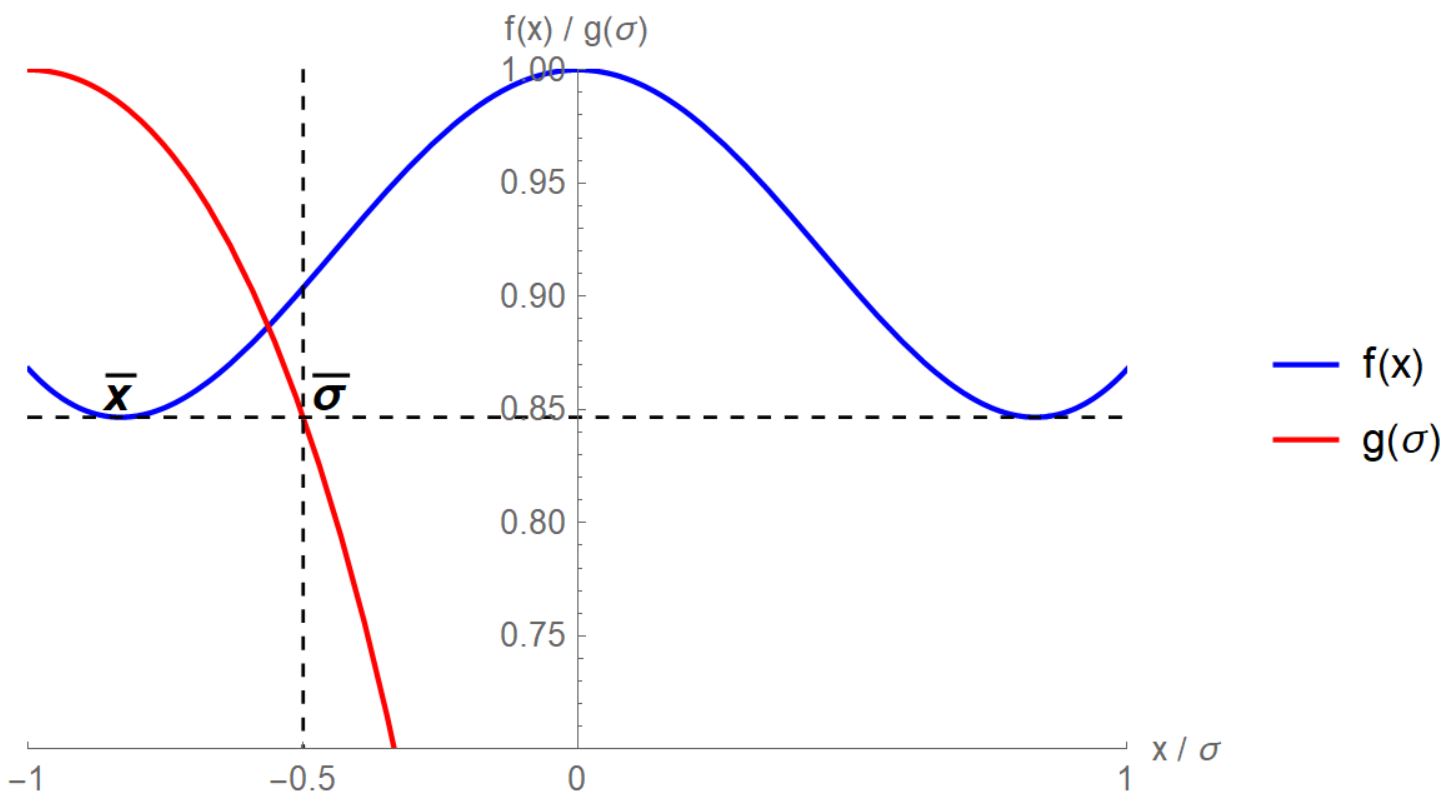} 
    \caption{Comparison between the primal function \( f(x) = e^{-x^2} + \beta x^2 \) and the dual function \( g(\sigma) = \sigma (\log(-\sigma) - 1) \), with \( \beta = 0.5 \). The vertical dashed line at \( \bar{\sigma} = -\beta \) and the horizontal dashed line at \( \beta(1 - \log(\beta)) \) highlight the feasible dual set and common optimal value. The optimal primal points are \( \bar{x} = \pm\sqrt{-\log(\beta)} \).}
    \label{fig:dual_primal_plot}
\end{figure}

\begin{example}
We provide two concrete examples where the dual feasible set \( D^+ \) can be explicitly characterized. These examples illustrate how specific structural assumptions on the mapping \( \Phi \) lead to tractable dual problems:
\begin{enumerate}
\item \textbf{Convex mapping.}  
Let \( \Phi: \mathbb{R}^n \to \mathbb{R}^m \) be a continuous mapping such that each component \( \Phi_i(x) \) is convex for \( i = 1, \dots, m \).  
Then the Lagrangian becomes:
\[
L(x, \sigma) = \sum_{i=1}^m \sigma_i \Phi_i(x) - V^*(\sigma),
\]
in this case, the feasible set for the dual problem is:
$$
D^+=\{\sigma\in\real^m:  \sigma_i \ge 0, i=1,\dots,m\}.
$$
That is the positive orthant, which is a self-dual, closed, and convex cone. This structure frequently appears in convex programming and Linear programming duality.

\item \textbf{Quadratic mapping.}  
Let \( \Phi: \mathbb{R}^n \to \mathbb{R}^m \) be a continuous mapping where each component has the form:
\[
\Phi_i(x) = x^\top A_i x + 2 b_i^\top x + c_i, \quad i = 1, \dots, m,
\]
with symmetric matrices \( A_i \in \mathbb{R}^{n \times n} \), vectors \( b_i \in \mathbb{R}^n \), and scalars \( c_i \in \mathbb{R} \).  
The corresponding Lagrangian is:
\[
L(x, \sigma) = \sum_{i=1}^m \sigma_i (x^\top A_i x + 2 b_i^\top x + c_i) - V^*(\sigma),
\]
and the  feasible set for the dual problem is:
\[
D^+ = \left\{ \sigma \in \mathbb{R}^m : \sum_{i=1}^m \sigma_i A_i \succeq 0 \right\}.
\]
This set is convex and self-dual cone and arises naturally in semidefinite programming (SDP). Its geometry has been extensively studied in the SDP literature.
\end{enumerate}
These cases demonstrate that when $\Phi$ satisfies common structural properties (such as convexity or quadratic form), the dual feasible set $D^+$ admits an explicit and well-understood characterization.
\end{example}

\subsection{Relations with Lagrangian Duality in Constrained Optimization}

After introducing the basics of duality for implicit convex functions, we now explore the relationship between this duality framework and the classical Lagrange duality in constrained optimization.  
To this end, we derive alternative optimality conditions for Problem (\ref{eq: prim}) based on its associated Lagrangian function and prove the equivalence between these conditions and the optimality conditions of the constrained optimization problem (\ref{eq: primc0}), that  we restate here for convenience:
\begin{equation}\label{eq: primc}
\begin{array}{ll}
\displaystyle \min_{x, y} &V(y)\\
\text{s.t.} & \Phi(x)=y
\end{array},
\end{equation}
where \( y \in \mathbb{R}^m \), and the function \( V:\mathbb{R}^m\rightarrow \overline{\mathbb{R}} \) and mapping \( \Phi:\mathbb{R}^n\rightarrow \mathbb{R}^m \) satisfy the same properties as in Definition \ref{def: icf}.  
In addition, we assume in this subsection that \( \Phi \) is locally Lipschitz continuous, in order to apply the subdifferential chain rule to the composite function \( f(x) = V(\Phi(x)) \). This is a standard regularity condition in variational analysis (cf. Section 10 in \cite{rock09}).

We introduce the Lagrangian function associated with Problem (\ref{eq: primc}):
$$
{\cal L}_c(x,y,\sigma) = V(y) - \langle\sigma, ( \Phi(x) - y) \rangle,
$$
where, with a slight abuse of notation, \( \sigma \in \mathbb{R}^m \) denotes the vector of Lagrange multipliers associated with the equality constraint.

To demonstrate the connections between the two problems, we show that Lagrangian function associated with Problem  (\ref{eq: prim})  can also be used to define alternative conditions for the stationary points of \( f \), just as the stationary conditions of  Problem
(\ref{eq: primc}) are obtained through ${\cal L}_c$.
This property is formalized in the following theorem, whose proof can be found in Appendix \ref{ap: proof}.

\begin{restatable}{theorem}{corr}\label{the: corr}
Let \( f(x) = V(\Phi(x)) \) be an implicit convex function with \( \Phi: \mathbb{R}^n \to \mathbb{R}^m \) locally Lipschitz continuous, and let \( L(x,\sigma) := \langle \sigma, \Phi(x) \rangle - V^*(\sigma) \) be its associated Lagrangian.
Let \( \bar{x} \in \mathbb{R}^n \) be a stationary point of \( f \), and let \( \bar{\sigma} \in \mathbb{R}^m \) be such that the Fenchel condition:
\[
\bar{\sigma} \in \partial V(\Phi(\bar{x}))
\]
is satisfied.
Then the pair \( (\bar{x}, \bar{\sigma}) \) is a stationary point of the Lagrangian \( L \), i.e.,
\begin{equation}\label{eq: ld1}
\begin{aligned}
0 &\in \partial_x L(\bar{x}, \bar{\sigma}) \Leftrightarrow 0 \in \partial(\bar{\sigma} \circ \Phi)(\bar{x}), \\
0 &\in \partial_\sigma L(\bar{x}, \bar{\sigma}) \Leftrightarrow 0 \in  \Phi(\bar{x})-\partial V^*(\bar{\sigma}).
\end{aligned}
\end{equation}
Furthermore, the function values coincide:
\begin{equation}\label{eq: valcorr}
f(\bar{x}) = L(\bar{x}, \bar{\sigma}).
\end{equation}
\end{restatable}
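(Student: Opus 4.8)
The plan is to derive the three assertions by unwinding the definition of $L$ in each of its two arguments separately and then invoking Theorem~\ref{the: fen_in}; the only step that needs real care is the chain rule in the primal variable, which is exactly why local Lipschitz continuity of $\Phi$ was assumed. For the primal condition, note that $L(x,\bar{\sigma}) = \langle \bar{\sigma}, \Phi(x)\rangle - V^*(\bar{\sigma})$ and the term $V^*(\bar{\sigma})$ is constant in $x$, so $\partial_x L(\bar{x},\bar{\sigma}) = \partial(\bar{\sigma}\circ\Phi)(\bar{x})$, where $\bar{\sigma}\circ\Phi$ is the scalar function $x\mapsto\langle\bar{\sigma},\Phi(x)\rangle$; this already gives the stated equivalence. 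To see that $0$ in fact belongs to this set, I would apply the subdifferential chain rule for the composition of the convex lsc function $V$ with the locally Lipschitz map $\Phi$ (cf. Section~10 in \cite{rock09}): under the assumed regularity it yields the upper estimate $\partial f(\bar{x})\subseteq\bigcup\{\partial(\sigma\circ\Phi)(\bar{x}):\sigma\in\partial V(\Phi(\bar{x}))\}$. Stationarity of $\bar{x}$, i.e. $0\in\partial f(\bar{x})$, then produces a multiplier $\bar{\sigma}\in\partial V(\Phi(\bar{x}))$ with $0\in\partial(\bar{\sigma}\circ\Phi)(\bar{x})$, and this is the point fixed in the statement.

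For the dual condition, observe that $L(\bar{x},\sigma) = \langle\sigma,\Phi(\bar{x})\rangle - V^*(\sigma)$ is concave in $\sigma$ (since $V^*$ is convex), so $-L(\bar{x},\cdot)$ is convex with $\partial\big({-}L(\bar{x},\cdot)\big)(\sigma) = \partial V^*(\sigma) - \Phi(\bar{x})$. Hence $0\in\partial_\sigma L(\bar{x},\bar{\sigma})$ is equivalent to $\Phi(\bar{x})\in\partial V^*(\bar{\sigma})$, i.e. to $0\in\Phi(\bar{x}) - \partial V^*(\bar{\sigma})$, which is the asserted equivalence. That this inclusion holds is precisely one of the equivalent Fenchel relations of Theorem~\ref{the: fen_in}: by~(\ref{eq: fenrel1}), the hypothesis $\bar{\sigma}\in\partial V(\Phi(\bar{x}))$ is the same as $\Phi(\bar{x})\in\partial V^*(\bar{\sigma})$. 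The value equality~(\ref{eq: valcorr}) is then immediate: the Fenchel--Young equality in~(\ref{eq: fenrel1}) gives $V(\Phi(\bar{x})) + V^*(\bar{\sigma}) - \langle\Phi(\bar{x}),\bar{\sigma}\rangle = 0$, so $V(\Phi(\bar{x})) = \langle\bar{\sigma},\Phi(\bar{x})\rangle - V^*(\bar{\sigma}) = L(\bar{x},\bar{\sigma})$, and since $\bar{x}\in C$ we have $f(\bar{x}) = V(\Phi(\bar{x}))$, whence $f(\bar{x}) = L(\bar{x},\bar{\sigma})$.

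The main obstacle is the chain-rule step in the primal part. For a merely locally Lipschitz $\Phi$, the inclusion $\partial f(\bar{x})\subseteq\bigcup_\sigma\partial(\sigma\circ\Phi)(\bar{x})$ requires a constraint qualification (for instance, that no nonzero horizon subgradient of $V$ at $\Phi(\bar{x})$ is annihilated by the coderivative of $\Phi$), which is automatic when $V$ is finite near $\Phi(\bar{x})$ or when $\Phi$ is $C^1$, as in all the concrete examples of the paper; I would state this hypothesis explicitly or note the cases in which it is vacuous. One must also check that the multiplier delivered by the chain rule can be taken to be the same $\bar{\sigma}$ appearing in the Fenchel condition, and fix a single notion of subdifferential (limiting or Clarke) throughout, since $\Phi$, and therefore $\bar{\sigma}\circ\Phi$, is non-convex. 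The dual condition and the value equality, by contrast, reduce entirely to Theorem~\ref{the: fen_in} and elementary convex-conjugate computations, so they present no difficulty.
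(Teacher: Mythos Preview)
Your proposal is correct and follows essentially the same route as the paper's own proof: the dual stationarity and the value equality come directly from the Fenchel relations of Theorem~\ref{the: fen_in}, and the primal stationarity from the generalized chain rule (Theorem~10.49 in \cite{rock09}) applied to $f=V\circ\Phi$. The concerns you raise in your final paragraph --- the implicit constraint qualification needed for the chain rule and the identification of the chain-rule multiplier with the given $\bar{\sigma}$ --- are legitimate and in fact apply equally to the paper's proof, which glosses over both points.
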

\noindent
Here, the expression \( \Phi(\bar{x}) - \partial V^*(\bar{\sigma}) \) denotes a Minkowski set difference:
\[
\Phi(\bar{x}) - \partial V^*(\bar{\sigma}) := \{ \Phi(\bar{x}) - v \mid v \in \partial V^*(\bar{\sigma}) \}.
\]
If a further assumption on the subdifferential is made, we can also prove the following:

\begin{restatable}{corollary}{corrbis}\label{the: corrbis}
Under the same assumptions as in Theorem~\ref{the: corr}, let the pair \( (\bar{x}, \bar{\sigma}) \) be stationary for the Lagrangian \( L \), and assume that the subdifferential \( \partial V(\Phi(\bar{x})) \) is a singleton:
\[
\partial V(\Phi(\bar{x})) = \{ \bar{\sigma} \}.
\]
Then \( \bar{x} \) is a stationary point of \( f \), and the function values coincide:
\[
f(\bar{x}) = L(\bar{x}, \bar{\sigma}).
\]
\end{restatable}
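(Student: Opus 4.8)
The plan is to run the argument behind Theorem~\ref{the: corr} in reverse, using the singleton hypothesis to promote the one-sided chain-rule inclusion there into an exact identity.

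First I would unpack the two halves of the Lagrangian stationarity of $(\bar x,\bar\sigma)$. From $0\in\partial_\sigma L(\bar x,\bar\sigma)$ and the equivalence in (\ref{eq: ld1}) we get $\Phi(\bar x)\in\partial V^*(\bar\sigma)$, and Theorem~\ref{the: fen_in} (the chain of equivalences in (\ref{eq: fenrel1})) then yields simultaneously $\bar\sigma\in\partial V(\Phi(\bar x))$ and the Fenchel--Young equality $V(\Phi(\bar x))+V^*(\bar\sigma)-\langle\Phi(\bar x),\bar\sigma\rangle=0$. Rearranging the latter gives $f(\bar x)=V(\Phi(\bar x))=\langle\Phi(\bar x),\bar\sigma\rangle-V^*(\bar\sigma)=L(\bar x,\bar\sigma)$, which already settles the value claim and, notably, does not use the singleton hypothesis. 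From $0\in\partial_x L(\bar x,\bar\sigma)$ and (\ref{eq: ld1}) I would record the remaining fact $0\in\partial(\bar\sigma\circ\Phi)(\bar x)$.

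It then remains to deduce $0\in\partial f(\bar x)$, and this is exactly where the assumption $\partial V(\Phi(\bar x))=\{\bar\sigma\}$ enters. A finite convex function whose subdifferential at a point reduces to a singleton is (strictly) differentiable there, so $V$ is differentiable at $\Phi(\bar x)$ with $\nabla V(\Phi(\bar x))=\bar\sigma$. Feeding this into the subdifferential chain rule for the composition of a function that is strictly differentiable at $\Phi(\bar x)$ with the locally Lipschitz map $\Phi$ --- the same chain rule used in the proof of Theorem~\ref{the: corr}, but now with no qualification gap, since the outer subdifferential is a single point --- produces the exact identity $\partial f(\bar x)=\partial(\langle\bar\sigma,\Phi(\cdot)\rangle)(\bar x)=\partial(\bar\sigma\circ\Phi)(\bar x)$. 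Combined with the previous step, $0\in\partial(\bar\sigma\circ\Phi)(\bar x)=\partial f(\bar x)$, so $\bar x$ is a stationary point of $f$.

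The delicate point is precisely this last equality. For a merely locally Lipschitz $\Phi$, the chain rule for $V\circ\Phi$ only delivers a set-valued bound of the form $\partial f(\bar x)\subseteq\bigcup_{\sigma\in\partial V(\Phi(\bar x))}\partial(\sigma\circ\Phi)(\bar x)$ (this is the inclusion exploited in Theorem~\ref{the: corr}); the role of the singleton hypothesis is to collapse this union to the single set $\partial(\bar\sigma\circ\Phi)(\bar x)$ and to remove any constraint qualification, via strict differentiability of $V$ at $\Phi(\bar x)$. I would also take care to keep every subdifferential in the sense fixed in this subsection (the limiting/Clarke subdifferential, legitimate under the standing local Lipschitz assumption on $\Phi$), so that the equivalences in (\ref{eq: ld1}) imported from Theorem~\ref{the: corr} apply verbatim.
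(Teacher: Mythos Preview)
Your proof is correct and follows the same approach as the paper: derive the Fenchel pair and the value identity from $\sigma$-stationarity via Theorem~\ref{the: fen_in}, then combine $x$-stationarity with the chain rule, collapsed by the singleton hypothesis, to obtain $0\in\partial f(\bar x)$. You are in fact more careful than the paper at the key step---explicitly invoking strict differentiability of $V$ at $\Phi(\bar x)$ to upgrade the chain-rule inclusion $\partial f(\bar x)\subseteq\partial(\bar\sigma\circ\Phi)(\bar x)$ to an equality---whereas the paper simply asserts the equality after collapsing the union.
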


\begin{remark}
The singleton assumption in Corollary \ref{the: corrbis} is automatically satisfied when the function $V$ is differentiable or  strictly convex at the point $\Phi(\bar{x})$. In fact, strict convexity guarantees that the subdifferential $\partial V(\Phi(\bar{x}))$ is a singleton whenever it is nonempty. Under such assumptions, commonly made in practice, especially differentiability, it is possible to establish a one-to-one correspondence between the stationary points of $f$ and those of $L$, and in particular, between their first-order optimality conditions.
\end{remark}

Theorem \ref{the: corr} provides a strong correspondence between the stationary points of \( f \)  those of \( L \), that becomes a one-to-one relation in case of differentiability.  
This connection between Problem (\ref{eq: prim}) and its Lagrangian function can be used to reveal the strong connection between Problems (\ref{eq: prim}) and (\ref{eq: primc}). We show that the stationarity conditions in Theorem \ref{the: corr} are equivalent to the Lagrange stationarity conditions for the  Problem~(\ref{eq: primc}). This equivalence is formally stated below; a complete proof is provided in Appendix~\ref{ap: proof}.

\begin{restatable}{theorem}{kktcon}\label{th: kktcon}
The point $(\bar{x},\bar\sigma)\in\real^{n\times m}$  satisfies the stationarity conditions (\ref{eq: ld1}) for Problem (\ref{eq: prim}) if and only if there exists a $\bar{y}\in\real^m$ such that the point $(\bar{x},\bar{y},\bar\sigma)$ satisfies the Lagrange stationarity conditions of Problem (\ref{eq: primc}):
\begin{subequations}
\begin{align}
0\in&\partial_x {\cal L}_c(\bar{x},\bar{y}, \bar\sigma)=\partial (\bar\sigma \circ \Phi)(\bar{x})   \label{eq: id21}\\
0\in&\partial_y {\cal L}_c(\bar{x},\bar{y}, \bar\sigma)=  \partial V(\bar{y})-\bar\sigma \label{eq: id22}\\
0=&    \Phi(\bar{x})-\bar{y}\label{eq: id23}
\end{align}
\end{subequations}
\end{restatable}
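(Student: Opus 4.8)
The plan is to prove both implications directly, the only substantive ingredient being the conjugate subgradient equivalence of Theorem~\ref{the: fen_in}; everything else is bookkeeping. The first observation is that the $x$-stationarity conditions of the two problems are \emph{the same statement}: both reduce to $0 \in \partial(\bar\sigma\circ\Phi)(\bar x)$ --- for $L$ this is the left equivalence in (\ref{eq: ld1}), established in Theorem~\ref{the: corr}, and for ${\cal L}_c$ it is the identity $\partial_x {\cal L}_c(\bar x,\bar y,\bar\sigma) = \partial(\bar\sigma\circ\Phi)(\bar x)$ recorded in the theorem statement, which follows from the subdifferential calculus for $x \mapsto \langle\bar\sigma,\Phi(x)\rangle$ under the local Lipschitz assumption on $\Phi$ (using $\partial(-h) = -\partial h$ to absorb the sign). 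Hence (\ref{eq: id21}) is identical to the $x$-part of (\ref{eq: ld1}), and the whole content of the theorem is to relate the remaining $\sigma$-condition $0 \in \Phi(\bar x) - \partial V^*(\bar\sigma)$ to the pair (\ref{eq: id22})--(\ref{eq: id23}). The bridge between the two formulations is the assignment $\bar y := \Phi(\bar x)$, which is the only natural candidate since (\ref{eq: id23}) forces $\bar y = \Phi(\bar x)$ in any case.

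For the ``only if'' direction I would assume that $(\bar x,\bar\sigma)$ satisfies (\ref{eq: ld1}) and set $\bar y := \Phi(\bar x)$. Then (\ref{eq: id23}) holds by construction, and (\ref{eq: id21}) holds because it coincides with the $x$-part of (\ref{eq: ld1}). The $\sigma$-part of (\ref{eq: ld1}) says $\Phi(\bar x) \in \partial V^*(\bar\sigma)$, which by the chain of equivalences (\ref{eq: fenrel1}) of Theorem~\ref{the: fen_in} is the same as $\bar\sigma \in \partial V(\Phi(\bar x)) = \partial V(\bar y)$, i.e.\ exactly condition (\ref{eq: id22}). Thus $(\bar x,\bar y,\bar\sigma)$ satisfies the Lagrange stationarity system of Problem (\ref{eq: primc}).

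For the ``if'' direction I would suppose there exists $\bar y$ with $(\bar x,\bar y,\bar\sigma)$ satisfying (\ref{eq: id21})--(\ref{eq: id23}). From (\ref{eq: id23}) we get $\bar y = \Phi(\bar x)$, so (\ref{eq: id21}) becomes the $x$-part of (\ref{eq: ld1}); and (\ref{eq: id22}), namely $\bar\sigma \in \partial V(\bar y) = \partial V(\Phi(\bar x))$, is equivalent by Theorem~\ref{the: fen_in} to $\Phi(\bar x) \in \partial V^*(\bar\sigma)$, that is, to $0 \in \Phi(\bar x) - \partial V^*(\bar\sigma)$, which is the $\sigma$-part of (\ref{eq: ld1}). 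Hence $(\bar x,\bar\sigma)$ satisfies (\ref{eq: ld1}), completing the equivalence.

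I do not expect any deep obstacle here: once Theorem~\ref{the: fen_in} is in hand, the result is essentially a matter of matching notation and eliminating the auxiliary variable $\bar y$ through $\bar y = \Phi(\bar x)$. The only points deserving a little care are (i) justifying the identification of the two $x$-stationarity conditions with $0 \in \partial(\bar\sigma\circ\Phi)(\bar x)$, which relies on the local Lipschitz regularity of $\Phi$ and the same subdifferential calculus already invoked for Theorem~\ref{the: corr}; and (ii) ensuring that the conjugate subgradient equivalence of Theorem~\ref{the: fen_in} is applied at a meaningful point --- i.e.\ with $\Phi(\bar x)$ in the effective domain of $V$, which is implicit whenever $\partial V(\Phi(\bar x))$ is used, and when that subdifferential is empty both sides of the biconditional fail simultaneously, so the equivalence is still valid.
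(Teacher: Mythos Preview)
Your proposal is correct and follows essentially the same route as the paper's proof: identify the $x$-stationarity conditions of the two problems as the same statement, set $\bar y := \Phi(\bar x)$ (which is forced by (\ref{eq: id23}) anyway), and use the conjugate subgradient equivalence of Theorem~\ref{the: fen_in} to pass between $\Phi(\bar x)\in\partial V^*(\bar\sigma)$ and $\bar\sigma\in\partial V(\bar y)$. Your write-up is in fact a bit more careful than the paper's about justifying the identification of the $x$-conditions and about the domain caveat for applying Theorem~\ref{the: fen_in}, but the argument is the same.
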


\begin{remark}
Theorem~\ref{th: kktcon} implies that any stationary point of Problem~(\ref{eq: prim}) corresponds to a stationary point of the Problem~(\ref{eq: primc}). In particular, the constraint \(\Phi(x) = y\) is implicitly enforced through the conjugacy conditions arising in the duality framework.

These properties do not generally hold in Fenchel-Rockafellar duality, but are a consequence of the structure of implicit convex functions. Specifically, the conjugacy relations established in Theorem~\ref{the: fen_in} enforce the constraint \(\Phi(x) = y\) at optimality, making it redundant in the primal formulation and allowing equivalence of optimality conditions between the unconstrained and constrained problems.
\end{remark}

Given the equivalence between Problems (\ref{eq: prim}) and (\ref{eq: primc}), one may wonder why Problem (\ref{eq: prim}) is preferred over Problem (\ref{eq: primc}). The primary challenge in solving Problem (\ref{eq: primc}) lies in its feasible region, which is defined by nonlinear equality constraints. This makes it difficult to ensure the feasibility of the sequence of points generated by an optimization algorithm and to fully exploit the convexity of the objective function.  
On the other hand, solving  Problem in (\ref{eq: prim}) is generally easier, even without assuming the convexity of \(V\). However, by leveraging such convexity, implicit convex duality can be used not only to define a dual problem with weak duality properties—providing a lower bound for the primal—but also to derive alternative optimality conditions that bypass the equality constraint.  

Additionally, under strong assumptions, a Lagrange saddle point result can be established, providing global optimality properties.

\begin{restatable}{theorem}{saddle}\label{th: saddle}
Let us consider the primal problem (\ref{eq: prim}), its associated Lagrangian function \(L\), and the dual problem (\ref{eq: dual}). 
Assume that a stationary point \((\bar{x},\bar\sigma)\) of \(L\) is such that $(\bar{x},\bar\sigma)\in C\times D^+$, that is
 \(\bar\sigma \in D^+\). Then:
\[
L(\bar{x}, \sigma) \le L(\bar{x}, \bar\sigma) \le L(x, \bar\sigma), \quad \forall\, x \in C,\, \sigma \in D^+,
\]
that is, the point \((\bar{x}, \bar\sigma)\) is a min-max saddle point of \(L\). Furthermore, \(\bar{x}\) is a global solution to the primal problem (\ref{eq: prim}).
\end{restatable}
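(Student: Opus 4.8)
The plan is to prove the two saddle inequalities separately, each one relying on one half of the stationarity system (\ref{eq: ld1}), and then to deduce global optimality of $\bar x$ by sandwiching $f(\bar x)$ between the saddle inequality and the weak-duality bound of Theorem \ref{th: wd1}.

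First I would treat the left inequality $L(\bar x,\sigma)\le L(\bar x,\bar\sigma)$. Viewed as a function of $\sigma$ alone, $L(\bar x,\sigma)=\langle\Phi(\bar x),\sigma\rangle-V^*(\sigma)$ is concave, being an affine term minus the convex function $V^*$. By Definition \ref{def: concon} together with $V^{**}=V$ one has $\sup_{\sigma}L(\bar x,\sigma)=(V^*)^*(\Phi(\bar x))=V(\Phi(\bar x))=f(\bar x)$, and the supremum is attained exactly at those $\sigma$ satisfying $\Phi(\bar x)\in\partial V^*(\sigma)$. Now the second stationarity relation in (\ref{eq: ld1}), namely $0\in\Phi(\bar x)-\partial V^*(\bar\sigma)$, says precisely that $\Phi(\bar x)\in\partial V^*(\bar\sigma)$; by Theorem \ref{the: fen_in} this is equivalent to $\bar\sigma\in\partial V(\Phi(\bar x))$ and to the Fenchel--Young equality $V(\Phi(\bar x))+V^*(\bar\sigma)=\langle\Phi(\bar x),\bar\sigma\rangle$. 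Hence $\bar\sigma$ attains the supremum, so $L(\bar x,\bar\sigma)=f(\bar x)\ge L(\bar x,\sigma)$ for every $\sigma\in D$, and in particular for every $\sigma\in D^+$.

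Next, for the right inequality $L(\bar x,\bar\sigma)\le L(x,\bar\sigma)$, I would invoke the hypothesis $\bar\sigma\in D^+$. By definition of $D^+$ the map $x\mapsto L(x,\bar\sigma)=\langle\Phi(x),\bar\sigma\rangle-V^*(\bar\sigma)$ is convex on the convex set $C$ (equivalently $(\bar\sigma\circ\Phi)(\cdot)$ is convex there), and extending it by $+\infty$ outside $C$ yields a convex function on all of $\mathbb{R}^n$. The first stationarity relation in (\ref{eq: ld1}) gives $0\in\partial_x L(\bar x,\bar\sigma)$, equivalently $0\in\partial(\bar\sigma\circ\Phi)(\bar x)$, and for a convex function a vanishing subgradient certifies a global minimum; therefore $L(\bar x,\bar\sigma)\le L(x,\bar\sigma)$ for all $x\in C$. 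This is the step where membership in $D^+$ is indispensable: off $D^+$ the condition $0\in\partial(\bar\sigma\circ\Phi)(\bar x)$ would only express limiting stationarity and need not locate a global minimizer.

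Finally I would combine the pieces. Passing to the infimum over $x\in C$ in the right inequality gives $g(\bar\sigma)=\inf_{x\in C}L(x,\bar\sigma)=L(\bar x,\bar\sigma)=f(\bar x)$, so the duality gap vanishes at $(\bar x,\bar\sigma)$. On the other hand, exactly as in the proof of Theorem \ref{th: wd1}, the choice $u=0$ gives $L(x,\bar\sigma)=\inf_u\{V(\Phi(x)-u)+\langle u,\bar\sigma\rangle\}\le V(\Phi(x))=f(x)$ for every $x\in C$. Chaining these yields $f(\bar x)=L(\bar x,\bar\sigma)\le L(x,\bar\sigma)\le f(x)$ for all $x\in C$, which shows that $\bar x$ is a global minimizer of the primal problem (\ref{eq: prim}). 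The only delicate points are bookkeeping which notion of subdifferential is in force on each side—the ordinary subdifferential of the concave function $L(\bar x,\cdot)$ on the dual side and of the convex function $L(\cdot,\bar\sigma)$ on the primal side, both legitimate precisely because $\bar\sigma\in D^+$—and using $V^{**}=V$ to identify $\sup_\sigma L(\bar x,\sigma)$ with $f(\bar x)$; neither is a genuine obstacle once Theorem \ref{the: fen_in} is in hand.
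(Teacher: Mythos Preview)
Your proposal is correct and follows essentially the same route as the paper: each saddle inequality is obtained from one half of the stationarity system (\ref{eq: ld1}) via convexity/concavity of $L$ in the respective variable, and global optimality is established through the chain $f(\bar x)=L(\bar x,\bar\sigma)\le L(x,\bar\sigma)\le f(x)$. The only cosmetic difference is that you derive $L(\bar x,\bar\sigma)=f(\bar x)$ directly from the Fenchel--Young equality and $V^{**}=V$, whereas the paper appeals to Theorem~\ref{the: corr} for the same identity.
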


In the case where \( D^+ \) is non-empty and convex, the results for saddle functions reported in the literature (such as in references \cite{eke, rock}) can also be applied within this framework. However, in general, there is no guarantee that a stationary point \( (\bar{x}, \bar{\sigma}) \) exists with \( \bar{\sigma} \in D^+ \), since the existence of a saddle point of the Lagrangian on $C\times D^+$ is a necessary and sufficient condition for strong duality, provided the primal problem admits an optimal solution.


\section{Strong Duality Results for Some Non-Convex Optimization Problems}

According to Theorem \ref{th: wd1}, if \( \bar{x} \) is a global solution of problem (\ref{eq: prim}) and \( \bar{\sigma} \) is a global solution of problem (\ref{eq: dual}), the following inequality always holds:
$$
g(\bar{\sigma}) \le f(\bar{x}).
$$
As in classical Lagrange duality for constrained non-convex optimization, a duality gap may exist between the optimal values of the primal and dual problems:
$$
g(\bar{\sigma}) < f(\bar{x}).
$$
In such cases, it is important to note that the solution of the dual problem (\ref{eq: dual}) provides a lower bound on the optimal value of the primal problem and acts as a certificate of quality for the current best solution.
In other instances, we can have strong duality results, which satisfy:
$$
g(\bar{\sigma}) = f(\bar{x}).
$$
While weak duality guarantees that the dual provides a lower bound on the primal solution, it does not ensure that the two values coincide. Without convexity assumptions on the primal problem, strong duality does not  hold in general, but only in specific cases. 
In the remainder of this section, we present two such cases in which strong duality holds for implicit convex functions. The first case is an unconstrained fourth-order polynomial, and the second is the minimization of a fourth-order polynomial with a trust region constraint. In the latter case, both Lagrangian duality for constrained optimization and the duality theory for implicit convex functions are used together.

\subsection{Quartic Polynomial Minimization}
Implicit convex duality can be used to derive the dual and prove strong duality results for the following quartic polynomial minimization problem:
\begin{equation}\label{eq: quar}\tag{$P_{qp}$}
\min_x \,\, f_{qp}(x) = x^T A_0 x + 2b_0^T x + c_0 + \left(x^T A_1 x + 2b_1^T x + c_1\right)^2,
\end{equation}
where \( x \in \mathbb{R}^n \), and the problem parameters are \( A_i \in S^n \), \( b_i \in \mathbb{R}^n \), and \( c_i \in \mathbb{R} \), for \( i = 0, 1 \). This problem is non-convex due to the squared quadratic term, and because the matrices \( A_i \) are not assumed to be positive semidefinite.

Although \( f_{qp} \) is not itself an implicit convex function, it can be decomposed as the sum of a general quadratic term and an implicit convex term:
\[
f_{qp}(x) = \underbrace{x^T A_0 x + 2b_0^T x + c_0}_{\text{quadratic term}} + \underbrace{V(\Phi(x))}_{\text{implicit convex term}},
\]
where
\begin{equation}\label{eq: vquar}
V(y) = y^2, \quad y=\Phi(x) = x^T A_1 x + 2b_1^T x + c_1.
\end{equation}

We apply the duality transformation only to the non-convex term \( V(\Phi(x)) \), which matches the structure of an implicit convex function as defined in Section~2. The remaining quadratic part is added directly to the Lagrangian. Since \( V(y) = y^2 \) is convex, its conjugate is:
\[
V^*(\sigma) = \frac{1}{4} \sigma^2, \quad \text{for } \sigma \in \mathbb{R}.
\]

Applying the general duality framework from Section~2, the Lagrangian function associated with Problem~\eqref{eq: quar} becomes:
\begin{equation}\label{eq: quarl}
L_{qp}(x,\sigma) = x^T A_0 x + 2b_0^T x + c_0 + \sigma \left(x^T A_1 x + 2b_1^T x + c_1 \right) - \frac{1}{4} \sigma^2.
\end{equation}


The dual objective function is easily obtained from (\ref{eq: quarl}), under the standard assumption \cite{bvcon} that $b_0+\sigma b_1 \in {\cal R}(A_0+\sigma A_1)$, where \( \mathcal{R}(\cdot) \) denotes the range of a matrix. First, we use the first-order conditions of \( L_{qp} \) to express \( x \) as a function of \( \sigma \):
$$
x = -(A_0 + \sigma A_1)^\dagger(b_0 + \sigma b_1),
$$
where \( (A_0 + \sigma A_1)^\dagger \) denotes the Moore-Penrose generalized inverse of \( (A_0 + \sigma A_1) \), and substitute this value back into the auxiliary function. Furthermore, \( D^+ \) is determined by noting that from the (\ref{eq: quarl}), \( L_{qp} \) is convex for values of \( \sigma \) such that \( (A_0 + \sigma A_1) \succeq 0 \). Thus, the dual function is:
\begin{equation}\label{eq: landual}
 g_{qp}'(\sigma) =
\begin{cases}
\begin{array}{l}	
c_0 + \sigma c_1  - \frac{1}{4} \sigma^2-\\- (b_0 + \sigma b_1)^T (A_0 + \sigma A_1)^\dagger (b_0 + \sigma b_1)
\end{array} & 
\begin{array}{r}	
(A_0+\sigma A_1)\succeq 0,	\\ b_0+\sigma b_1 \in {\cal R}(A_0+\sigma A_1)
\end{array}\\
-\infty & \quad \text{otherwise.}
\end{cases}
\end{equation}
We observe that the quantity
\[
c_0 + \sigma c_1 - (b_0 + \sigma b_1)^T (A_0 + \sigma A_1)^\dagger (b_0 + \sigma b_1),
\]
matches the standard form of a \emph{Schur complement} inequality. In particular, for a symmetric matrix \( A_0 + \sigma A_1 \succeq 0 \), we can introduce an auxiliary variable \( \gamma \) so that the inequality:
\[
c_0 + \sigma c_1 - \gamma \geq (b_0 + \sigma b_1)^T (A_0 + \sigma A_1)^\dagger (b_0 + \sigma b_1),
\]
is equivalent to the matrix condition:
\[
\left[\begin{array}{cc}
A_0 + \sigma A_1 & b_0 + \sigma b_1 \\
(b_0 + \sigma b_1)^T & c_0 + \sigma c_1 - \gamma
\end{array}\right] \succeq 0,
\]
which allows us to express the dual problem using a linear matrix inequality, leading to the following formulation:
\begin{equation}\label{eq: quard}\tag{$D_{qp}$}
\begin{array}{ll} 
\displaystyle \max_{(\sigma, \gamma)}\,\, g_{qp}(\sigma, \gamma) = & \gamma - \frac{1}{4} \sigma^2 \vspace{0.5em}\\
\displaystyle \text{s.t.} & \displaystyle 
\left[\begin{array}{cc}
A_0 + \sigma A_1 & b_0 + \sigma b_1 \\
(b_0 + \sigma b_1)^T & c_0 + \sigma c_1 - \gamma
\end{array}\right] \succeq 0.
\end{array}
\end{equation}

It is interesting to compare Problem (\ref{eq: quard}) with the dual of one of the most well-known non-convex optimization problems for which strong duality holds: the single constraint quadratic optimization problem:
\begin{equation}\label{eq: quadcon}
\tag{$P_{qc}$}
\begin{array}{cc}
\displaystyle \min_x & \displaystyle f_{qc}(x) = x^T A_0 x + 2 b_0^T x + c_0 \vspace{0.5em}\\
\displaystyle \text{s.t.} & \displaystyle x^T A_1 x + 2 b_1^T x + c_1 \le 0
\end{array},
\end{equation}
with variable \( x \in \mathbb{R}^n \), and problem parameters \( A_i \in S^n \), \( b_i \in \mathbb{R}^n \), and \( c_i \in \mathbb{R} \), for \( i = 0, 1 \). This problem is well-known as the generalized version of the trust region problem in optimization, which has the simpler constraint \( x^T x \le 1 \), and is the principal problem that inspired the research into \emph{Hidden Convex Optimization} \cite{bente96}. The non-convexity of Problem (\ref{eq: quadcon}) arises because the matrices \( A_0 \) and \( A_1 \) are not necessarily positive semi-definite.
\emph{Hidden Convex Optimization} refers to a substantial line of research focused on extending convex optimization techniques to certain classes of non-convex problems. The term is introduced in \cite{bente96}, though foundational ideas appeared much earlier, as surveyed in \cite{horst84}. For a more comprehensive and recent overview, see \cite{hcsur}.


By standard procedures of Lagrange duality in constrained optimization, we obtain the dual of Problem (\ref{eq: quadcon}) as (see Appendix B of \cite{bvcon} and citations therein for more insights):
\begin{equation}\label{eq: quadd} \tag{$D_{qc}$}
\begin{array}{cll} 
\displaystyle \max_{(\lambda, \gamma)} & g_{qc}(\lambda, \gamma) =  \gamma \vspace{0.5em}\\
\displaystyle \text{s.t.} & \lambda \ge 0 \vspace{0.5em}\\
& \displaystyle \left[\begin{array}{cc}
A_0 + \lambda A_1 & b_0 + \lambda b_1 \\
(b_0 + \lambda b_1)^T & c_0 + \lambda c_1 - \gamma
\end{array}\right] \succeq 0
\end{array},
\end{equation}
where \( \lambda \ge 0 \) is the Lagrange multiplier associated with the quadratic constraint.

There are two principal differences between the dual problems:
\begin{enumerate}
\item The dual variable \( \sigma \) in Problem (\ref{eq: quard}) is free in sign, whereas the dual variable \( \lambda \) in Problem (\ref{eq: quadd}) is non-negative.
\item The dual objective function \( g_{qp} \) in Problem (\ref{eq: quard}) contains a quadratic term that corresponds to the conjugate function \( V^*(\sigma) \), which is not present in the dual objective \( g_{qc} \) of Problem (\ref{eq: quadd}).
\end{enumerate}
The absence of the non-negativity constraint in Problem (\ref{eq: quard}) makes it somewhat simpler to analyze. On the other hand, the quadratic term in the objective function of \( g_{qp} \) makes Problem (\ref{eq: quard}) more challenging to handle, though it does not affect its convexity properties.

More importantly, both problems share the same type of positive semi-definite constraint. This characteristic is crucial, as the primary difficulties regarding the strong duality between Problems (\ref{eq: quadcon}) and (\ref{eq: quadd}) stem from this constraint. 

Strong duality does not generally hold for non-convex problems. However, in this case, the structure of the obtained convex dual is similar to that of Problem (\ref{eq: quadd}). Therefore, the techniques to obtain the well-known results of strong duality for the trust region problem can be adapted to the considered case, ensuring the absence of a duality gap.

Before proving such strong duality properties, we introduce the following assumption, which is commonly made in the literature (see Assumption in paragraph 2 of \cite{bente96}) for Problem (\ref{eq: quadcon}) to ensure that the dual feasible space is non-empty:
\begin{assumption}\label{ass: ass1}
There exists \( \alpha \in \mathbb{R} \) such that
$$
(A_0 + \alpha A_1) \succeq 0.
$$
\end{assumption}
It is important to note that Assumption \ref{ass: ass1} must also hold in the case of Problem (\ref{eq: quar}), ensuring that the dual Problem (\ref{eq: quard}) has a non-empty feasible set.
It is easy to verify that under Assumption~\ref{ass: ass1}, the feasible set of problem~\eqref{eq: quard} is convex and non-empty, as it is defined by a linear matrix inequality. Moreover, since the objective function is strictly concave and upper semi-continuous, standard results from convex optimization ensure that the dual problem admits an optimal solution.

We now prove the strong duality result between Problems (\ref{eq: quar}) and (\ref{eq: quard}) in the following theorem:

\begin{theorem}\label{the: sdquad}
Let Assumption \ref{ass: ass1} hold, and let $(\bar\sigma, \bar\gamma)$ be the solution of Problem (\ref{eq: quard}). Then, there exists a point \( \bar{x} \in \mathbb{R}^n \) such that
$$
f_{qp}(\bar{x}) = g_{qp}(\bar\sigma, \bar\gamma),
$$
and \( \bar{x} \) is the global solution of Problem (\ref{eq: quar}).
\end{theorem}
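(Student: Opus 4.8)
The plan is to prove weak duality between \eqref{eq: quar} and \eqref{eq: quard} first, and then to show that at the dual optimum $(\bar\sigma,\bar\gamma)$ the weak‑duality bound is attained by an explicitly constructed primal point, which by weak duality is automatically a global minimizer.

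\textbf{Weak duality.} For any dual‑feasible $(\sigma,\gamma)$, the Schur‑complement factorization of the linear matrix inequality in \eqref{eq: quard} gives $A_0+\sigma A_1\succeq 0$, $b_0+\sigma b_1\in\mathcal{R}(A_0+\sigma A_1)$, and $\gamma\le c_0+\sigma c_1-(b_0+\sigma b_1)^T(A_0+\sigma A_1)^\dagger(b_0+\sigma b_1)$. The right‑hand side equals $\tfrac14\sigma^2+\inf_x L_{qp}(x,\sigma)$ by the standard minimization of a positive‑semidefinite quadratic, so $g_{qp}(\sigma,\gamma)=\gamma-\tfrac14\sigma^2\le\inf_x L_{qp}(x,\sigma)\le L_{qp}(x,\sigma)$ for every $x$. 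Finally $L_{qp}(x,\sigma)\le f_{qp}(x)$, because $f_{qp}(x)-L_{qp}(x,\sigma)=\Phi(x)^2-\sigma\Phi(x)+\tfrac14\sigma^2=(\Phi(x)-\tfrac12\sigma)^2\ge 0$, which is exactly the Fenchel--Young inequality for $V(y)=y^2$ and $V^*(\sigma)=\tfrac14\sigma^2$. Thus $g_{qp}(\sigma,\gamma)\le f_{qp}(x)$ always, and it remains only to exhibit $\bar x$ with $f_{qp}(\bar x)=g_{qp}(\bar\sigma,\bar\gamma)$.

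\textbf{Recovering the primal point.} Since $\bar\gamma$ is maximal for the given $\bar\sigma$, the Schur inequality holds with equality, so $b_0+\bar\sigma b_1\in\mathcal{R}(A_0+\bar\sigma A_1)$; hence $\bar x:=-(A_0+\bar\sigma A_1)^\dagger(b_0+\bar\sigma b_1)$ solves $(A_0+\bar\sigma A_1)\bar x=-(b_0+\bar\sigma b_1)$ and is a global minimizer of the convex quadratic $L_{qp}(\cdot,\bar\sigma)$ with value $g_{qp}(\bar\sigma,\bar\gamma)$ — a property preserved when $\bar x$ is replaced by $\bar x+z$ for any $z\in\ker(A_0+\bar\sigma A_1)$, since $b_0+\bar\sigma b_1\perp\ker(A_0+\bar\sigma A_1)$. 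Because $f_{qp}(\bar x+z)-L_{qp}(\bar x+z,\bar\sigma)=(\Phi(\bar x+z)-\tfrac12\bar\sigma)^2$, the whole statement reduces to choosing $z\in\ker(A_0+\bar\sigma A_1)$ with $\Phi(\bar x+z)=\tfrac12\bar\sigma$: then $f_{qp}(\bar x+z)=g_{qp}(\bar\sigma,\bar\gamma)$, and weak duality forces $\bar x+z$ to be globally optimal for \eqref{eq: quar}.

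\textbf{Enforcing $\Phi(\bar x)=\tfrac12\bar\sigma$.} If $A_0+\bar\sigma A_1\succ 0$, then $\bar\sigma$ is interior to $D^+=\{\sigma:A_0+\sigma A_1\succeq 0\}$; near $\bar\sigma$ the dual objective coincides with the smooth concave function $g'_{qp}(\sigma)=\inf_x L_{qp}(x,\sigma)$ of \eqref{eq: landual}, whose derivative at $\bar\sigma$ is $\partial_\sigma L_{qp}(\bar x,\bar\sigma)=\Phi(\bar x)-\tfrac12\bar\sigma$ (the $x$‑gradient of $L_{qp}$ vanishes at the unique minimizer $\bar x$, so the envelope theorem applies), and interior maximality gives $\Phi(\bar x)-\tfrac12\bar\sigma=0$ with $z=0$. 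The remaining case, $A_0+\bar\sigma A_1$ singular and $\bar\sigma$ on the boundary of $D^+$, is the classical ``hard case'': one picks a nonzero $v\in\ker(A_0+\bar\sigma A_1)$, writes $\Phi(\bar x+tv)=\Phi(\bar x)+2t\,(A_1\bar x+b_1)^T v+t^2\,v^T A_1 v$ as a scalar quadratic in $t$, and shows the equation $\Phi(\bar x+tv)=\tfrac12\bar\sigma$ has a real root, using the first‑order optimality of \eqref{eq: quard} in $\sigma$ together with complementarity for the active semidefinite constraint to fix the sign of the relevant coefficients — adapting the well‑known strong‑duality argument for the trust‑region‑type problem \eqref{eq: quadcon}. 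This hard case is the main obstacle; the rest is routine Schur‑complement and Fenchel--Young bookkeeping. In either case $f_{qp}(\bar x+z)=g_{qp}(\bar\sigma,\bar\gamma)$, so by weak duality $\bar x+z$ is a global solution of \eqref{eq: quar}, which completes the proof.
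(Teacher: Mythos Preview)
Your overall strategy is sound and differs genuinely from the paper's. The paper works directly with the KKT system of the SDP $(D_{qp})$: it takes the multiplier matrix $\bar Z\in S^{n+1}_+$ and invokes the joint-numerical-range result of Brickman/Hestenes (for any $Z\succeq 0$ and any two symmetric matrices $A,B$ there is a vector $z$ with $z^\top Az=\operatorname{tr}(AZ)$ and $z^\top Bz=\operatorname{tr}(BZ)$) to replace $\bar Z$ by a rank-one term $(\bar v,\bar w)(\bar v,\bar w)^\top$, then sets $\bar x=\bar v/\bar w$ and substitutes the KKT identities to obtain $g_{qp}(\bar\sigma,\bar\gamma)=f_{qp}(\bar x)$. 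This avoids any easy/hard split on the definiteness of $A_0+\bar\sigma A_1$; the only degenerate case it must treat is $\bar w=0$, handled by a limiting argument. Your route --- weak duality, envelope theorem in the interior, trust-region-style hard case on the boundary --- is more elementary in spirit and keeps the primal construction explicit, but it forces the hard case onto you instead of absorbing it into the rank-one extraction.

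The genuine gap is precisely there. You reduce the hard case to finding $t\in\mathbb R$ with
\[
q(t):=\Phi(\bar x+tv)-\tfrac12\bar\sigma=(v^\top A_1 v)\,t^2+2(A_1\bar x+b_1)^\top v\,t+\bigl[\Phi(\bar x)-\tfrac12\bar\sigma\bigr]=0
\]
and assert that optimality and complementarity ``fix the sign of the relevant coefficients'', but you never carry this out. Unlike the classical trust-region hard case, here $\sigma$ is free in sign and $A_1$ is indefinite, so $v^\top A_1 v$ can be positive, negative, or zero; $D^+$ can be an interval with $\bar\sigma$ at either endpoint; and the one-sided derivative inequality you extract from maximality of $g'_{qp}$ changes direction accordingly. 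Concretely, you must show the discriminant $[(A_1\bar x+b_1)^\top v]^2-(v^\top A_1 v)\bigl[\Phi(\bar x)-\tfrac12\bar\sigma\bigr]\ge 0$, which requires tying the sign of $\Phi(\bar x)-\tfrac12\bar\sigma$ (the one-sided derivative of $g'_{qp}$ at $\bar\sigma$) to the sign of $v^\top A_1 v$ (which determines on which side of $\bar\sigma$ positive-semidefiniteness is lost), and also disposing of the cases where $\dim\ker(A_0+\bar\sigma A_1)>1$ or $v\in\ker A_1$. None of this is done, and ``adapting the well-known argument'' is not a proof. The paper's field-of-values device is exactly what buys a uniform treatment and sidesteps this case analysis.
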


\begin{proof}
If the point $( \bar\sigma,\bar\gamma)$ is a solution of Problem (\ref{eq: quard}), then there exists a matrix of multipliers $\bar{Z}\in S_+^{n+1}$ such that the Karush-Kuhn-Tucker (KKT) conditions of Problem (\ref{eq: quard}) are satisfied:
\begin{subequations}
\begin{align}
 2tr\left(\left[\begin{array}{cc}
 A_1& b_1\\
b_1^T& c_1\\
\end{array}\right]
\bar{Z}\right)=\bar\sigma,&\quad \mbox{Optimality}\vspace{0.5em}\label{eq: kkt1}\\
tr\left(
\left[\begin{array}{cc}
A_0+\bar\sigma A_1&b_0+\bar\sigma b_1\\
(b_0+\bar\sigma b_1)^T&c_0+\bar\sigma c_1-\bar\gamma\\
\end{array}\right]
\bar{Z}\right)=0,&\quad \mbox{Complementarity} \vspace{0.5em}\label{eq: kkt2}\\
\left[\begin{array}{cc}
A_0+\bar\sigma A_1&b_0+\bar\sigma b_1\\
(b_0+\bar\sigma b_1)^T&c_0+\bar\sigma c_1-\bar\gamma\\
\end{array}\right]\succeq0,&\quad \mbox{Dual feasibility}\vspace{0.5em}\label{eq: kkt3}\\
\bar{Z}\succeq 0, \quad \bar{Z}_{n+1,n+1}=1, & \quad \mbox{Multiplier feasibility}\vspace{0.5em}\label{eq: kkt4}
\end{align}
\end{subequations}
where  $\bar{Z}_{n+1,n+1}$ is the $n+1$-th element on the diagonal of matrix $\bar{Z}$. 

A well-known result in the literature on the field of values of symmetric matrices \cite{bri61,hes68}  states that for two symmetric matrices $A,B\in S^{p}$, then for all $Z\in S^p_+$ there exists a vector $z\in\real^p$ such that:
\begin{equation}\label{eq: fov}
z^TAz=tr(AZ),\quad z^TBz=tr(BZ).
\end{equation}
The two matrices we consider are:
\begin{subequations}\label{eq: field}
\begin{align}
tr\left(\left[\begin{array}{cc}
 A_1& b_1\\
b_1^T& c_1-\frac{1}{2}\bar{\sigma}\\
\end{array}\right]\bar{Z}\right) 
=&    
\left[\begin{array}{c}
\bar{v}\\
\bar{w}\\
\end{array}\right]^T\left[\begin{array}{cc}
 A_1& b_1\\
b_1^T& c_1-\frac{1}{2}\bar{\sigma}\\
\end{array}\right]
\left[\begin{array}{c}
\bar{v}\\
\bar{w}\\
\end{array}\right]
,\vspace{0.5em}\label{eq: field1}
\\
tr\left(
\left[\begin{array}{cc}
A_0+\bar\sigma A_1&b_0+\bar\sigma b_1\\
(b_0+\bar\sigma b_1)^T&c_0+\bar\sigma c_1-\bar\gamma\\
\end{array}\right]
\bar{Z}\right)
=&
\left[\begin{array}{c}
\bar{v}\\
\bar{w}\\
\end{array}\right]^T
\left[\begin{array}{cc}
A_0+\bar\sigma A_1&b_0+\bar\sigma b_1\\
(b_0+\bar\sigma b_1)^T&c_0+\bar\sigma c_1-\bar\gamma\\
\end{array}\right]
\left[\begin{array}{c}
\bar{v}\\
\bar{w}\\
\end{array}\right],\label{eq: field2}
\end{align}
\end{subequations}

where $\bar{v}\in \real^n$ and $\bar{w}\in\real$ are the components of the vector $\bar{z}\in\real^{n+1}$ associated with the matrix $\bar{Z}$ through the relation in (\ref{eq: fov}).  
It is easy to notice that both the first and the second relation in the (\ref{eq: field}) are equal to zero. The  (\ref{eq: field1}) because of the (\ref{eq: kkt1}) and the second in the  (\ref{eq: kkt4}), and the (\ref{eq: field2}) because of (\ref{eq: kkt2}).

In the case $\bar{w}\neq 0$, by choosing:
$$
\bar{x}=\bar{v}/\bar{w},
$$
with $\bar{w}\neq 0$, the (\ref{eq: kkt1}) becomes:
\begin{equation}\label{eq: kkt1.1}
\bar{\sigma} = 2\left[\begin{array}{c}
\bar{x}\\
1\\
\end{array}\right]^T\left[\begin{array}{cc}
 A_1& b_1\\
b_1^T& c_1\\
\end{array}\right]
\left[\begin{array}{c}
\bar{x}\\
\bar{1}\\
\end{array}\right].
\end{equation}
We obtain $\bar\gamma$ from the complementarity conditions, as given by equation  (\ref{eq: field2}):
\begin{equation}\label{eq: optg1}
\bar\gamma = \frac{1}{\bar{w}^2}\left[\begin{array}{c}
\bar{v}\\
\bar{w}\\
\end{array}\right]^T
\left[\begin{array}{cc}
 A_0& b_0\\
b_0^T& c_0\\
\end{array}\right]
\left[\begin{array}{c}
\bar{v}\\
\bar{w}\\
\end{array}\right]
+
\frac{\bar\sigma}{\bar{w}^2}\left[\begin{array}{c}
\bar{v}\\
\bar{w}\\
\end{array}\right]^T
\left[\begin{array}{cc}
 A_1& b_1\\
b_1^T& c_1\\
\end{array}\right]
\left[\begin{array}{c}
\bar{v}\\
\bar{w}\\
\end{array}\right],
\end{equation}
from which, by substituting the value of $\bar\sigma$ in the (\ref{eq: kkt1.1}) and $\bar{x}=\bar{v}/\bar{w}$, it results that $\bar\gamma$ is equal to:
\begin{equation}\label{eq: optg2}
\bar\gamma=\left[\begin{array}{c}
\bar{x}\\
1\\
\end{array}\right]^T
\left[\begin{array}{cc}
 A_0& b_0\\
b_0^T& c_0\\
\end{array}\right]
\left[\begin{array}{c}
\bar{x}\\
1\\
\end{array} \right]
+
2\left(\left[\begin{array}{c}
\bar{x}\\
1\\
\end{array} \right]^T
\left[\begin{array}{cc}
 A_1& b_1\\
b_1^T& c_1\\
\end{array}\right]
\left[\begin{array}{c}
\bar{x}\\
1\\
\end{array} \right]
\right)^2.
\end{equation}
By substituting both expressions for \( \bar{\gamma} \) from equation (\ref{eq: optg2}) and \( \bar{\sigma} \) from equation (\ref{eq: kkt1.1}) back into the dual objective function, we obtain:
\begin{equation}\label{eq: objz}
g_{qp}(\bar\sigma,\bar\gamma)=
\left[\begin{array}{c}
\bar{x}\\
1\\
\end{array}\right]^T
\left[\begin{array}{cc}
 A_0& b_0\\
b_0^T& c_0\\
\end{array}\right]
\left[\begin{array}{c}
\bar{x}\\
1\\
\end{array} \right]
+
\left(\left[\begin{array}{c}
\bar{x}\\
1\\
\end{array} \right]^T
\left[\begin{array}{cc}
 A_1& b_1\\
b_1^T& c_1\\
\end{array}\right]
\left[\begin{array}{c}
\bar{x}\\
1\\
\end{array} \right]
\right)^2,
\end{equation}
that is, $g_{qp}(\bar\sigma,\bar\gamma)$ is equal to:
$$
\bar{x}^TA_0\bar{x}+2b_0^T\bar{x}+c_0+(\bar{x}^TA_1\bar{x}+2b_1^T\bar{x}+c_1)^2,
$$
that is exactly the value of the primal problem in the point $\bar{x}$, and therefore:
$$
g_{qp}(\bar\sigma,\bar\gamma)=f_{qp}(\bar{x}),
$$
and, for the results in Theorem \ref{th: wd1}, $\bar{x}$ is the global optimal solution for Problem (\ref{eq: quar}).

In the case $\bar{w}=0$, if $\bar{v}=0_n$, then in the (\ref{eq: optg1}) we encounter the indeterminate form $\frac{0}{0}$, both when we perform the operations $\bar{v}/\bar{w}$ and $\bar{w}/\bar{w}$. 
However, for any positive $\bar{w}=\epsilon>0$, it is possible to obtain $\bar{x}(\epsilon)=\bar{v}/\epsilon=0_n$, $\bar{w}/\bar{w}=\epsilon/\epsilon=1$. In this case we have:
$$
\bar\gamma(\epsilon) = c_0+2 c_1^2, \quad \bar\sigma(\epsilon)=2c_1.
$$
resulting in:
$$
g_{qp}(\bar\sigma(\epsilon),\bar\gamma(\epsilon))=f_{qp}(\bar{x}(\epsilon))=c_0+c_1^2, \quad \forall \epsilon>0.
$$

Now, suppose \( \bar{w} = 0 \) and \( \bar{v} \ne 0_n \). From the KKT conditions (\ref{eq: kkt1}), (\ref{eq: kkt2}) and the matrix identities in (\ref{eq: field}), we deduce that:
$$
\bar{v}^TA_1\bar{v}=0, \quad \bar{v}^TA_0\bar{v}=0.
$$
We fix \( \bar{v} \) and consider a perturbed value \( \hat{w} = \epsilon > 0 \), and analyze the limit as \( \hat{w} \to \bar{w} \), i.e., \( \epsilon \to 0 \). This allows us to approach the case \( \bar{w} = 0 \) while preserving the rank-1 structure in the perturbed matrix \( Z(\epsilon) \). We observe that:
$$
\lim_{\epsilon\rightarrow 0} \frac{\bar{v}^TA_1\bar{v}}{\epsilon^2}=0, \quad  \lim_{\epsilon\rightarrow 0}\frac{\bar{v}^TA_0\bar{v}}{\epsilon^2}=0.
$$
Therefore, from the (\ref{eq: optg1}) we obtain:
$$
\lim_{\epsilon\rightarrow 0}\bar\gamma(\epsilon)=\lim_{\epsilon\rightarrow 0}\left(2b_0^T \frac{\bar{v}}{\epsilon}+c_0\right)+2\left(2b_1^T \frac{\bar{v}}{\epsilon}+c_1\right)^2.
$$
We have four cases:
\begin{enumerate}
\item  ${\bf b_0^T\bar{v}\neq 0, b_1^T\bar{v}\neq 0}$. In this case we have:
$$
\lim_{\epsilon\rightarrow 0}\bar\gamma(\epsilon) = + \infty,
$$
As the term in $b_1^T\bar{v}$ is squared and dominates the first one.
This implies that there exists \( \bar{\epsilon} > 0 \) such that for any \( \epsilon \leq \bar{\epsilon} \), the dual feasibility constraint in equation (\ref{eq: kkt3}) is no longer satisfied. This would make the point \( (\bar{\sigma}, \bar{\gamma}) \) infeasible, contradicting the assumptions.

\item ${\bf b_0^T\bar{v}= 0, b_1^T\bar{v}\neq 0}$. Same as case 1.

\item ${\bf b_0^T\bar{v} \neq 0,\; b_1^T\bar{v} = 0}$. In this case,
\[
\lim_{\epsilon \to 0} \bar\gamma(\epsilon) = \pm \infty.
\]
If \( \bar\gamma(\epsilon) \to +\infty \), we argue as in Case 1. If instead \( \bar\gamma(\epsilon) \to -\infty \), then the objective function tends to \( -\infty \) while \( \bar\sigma = 2c_1 \) remains fixed as consequence of the (\ref{eq: field1}) with $\bar{v}^TA_1\bar{v}=0$, $b_1^T\bar{v} = 0$ and $\hat{w}=\epsilon>0$ .

Since \( \bar\sigma \) is feasible, the Schur complement conditions must be satisfied. In particular, this implies that the vector \( b_0 + \bar\sigma b_1 \) lies in the range of \( A_0 + \bar\sigma A_1 \), i.e., 
\[
b_0 + \bar\sigma b_1 \in \mathcal{R}(A_0 + \bar\sigma A_1).
\]
As a result, we can define a scalar $\tilde\gamma$, so that the point \( (\tilde\gamma, \bar\sigma) \)  is  feasible for Problem (\ref{eq: quard}):
\[
\tilde\gamma = c_0 + \bar\sigma c_1 - (b_0 + \bar\sigma b_1)^T (A_0 + \bar\sigma A_1)^\dagger (b_0 + \bar\sigma b_1),
\]
which is finite due to the above inclusion. Hence, the point \( (\tilde\gamma, \bar\sigma) \) is feasible and achieves a finite objective value. This contradicts the assumption that \( (\bar\gamma, \bar\sigma) \), with \( \bar\gamma(\epsilon) \to -\infty \), is optimal.
\item ${\bf b_0^T\bar{v}= 0, b_1^T\bar{v}= 0}$. We have:
$$
\bar\gamma(\epsilon) = c_0+2 c_1^2, \quad \bar\sigma(\epsilon)=2c_1,
$$
that corresponds with the case $\bar{v}=0_n$ and  $\bar{w}=0$, with no duality gap.
\end{enumerate}
This concludes the proof.
\end{proof}

\subsection{Quadratically-Constrained Quartic Polynomial Minimization}

Since both the duality theory for implicit convex functions and the Lagrangian formulation of constrained optimization problems can be analyzed within the Fenchel-Rockafellar conjugate duality framework, it is natural to investigate whether strong duality properties can be established for constrained problems with implicit convex objectives.

Therefore, in this section, we combine implicit convex duality and Lagrange duality to formulate the dual for the following quartic minimization problem with a quadratic constraint:
\begin{equation}\label{eq: bcqp}\tag{$P_{qq}$}
\begin{array}{cl}
\displaystyle\min_{x \in \mathbb{R}^n} f_{qq} = & \displaystyle (x^T A_1 x + 2 b_1^T x + c_1)^2 \vspace{0.5em} \\
\text{subject to} & \displaystyle x^T A_2 x + 2 b_2^T x + c_2 \leq 0
\end{array},
\end{equation}
where \( A_i \in S^n \), \( b_i \in \mathbb{R}^n \), and \( c_i \in \mathbb{R} \), for \( i = 1, 2 \). We prove the strong duality relations between this problem and its newly formulated dual, demonstrating that the duality results presented in this paper can be seamlessly integrated  with the Lagrange duality theory for constrained optimization to obtain new dual formulations.

First, we report the Lagrangian function associated with Problem (\ref{eq: bcqp}):
\begin{equation}\label{eq: llbq}
\displaystyle {\cal L}_{qq}(x,\lambda)= (x^TA_1x+2b_1^Tx+c_1)^2+ \lambda(x^TA_2x+2b_2^Tx+c_2)
\end{equation}
where $\lambda\in\real_+$ is the Lagrange multiplier associated with the quadratic constraint. Then, we apply the transformation on the fourth-order term in the (\ref{eq: llbq}) and obtain the second Lagrangian function associated with Problem (\ref{eq: bcqp}):
$$
 L_{qq}(x,\lambda,\sigma)=\displaystyle \sigma(x^TA_1x+2b_1^Tx+c_1)+ \lambda (x^TA_2x+2b_2^Tx+c_2)-\frac{1}{4}\sigma^2, 
$$ 
where $\sigma\in\real$ is the dual variable associated with the quartic form in the objective function. 
By defining the set:
\[
C = \left\{ x \in \mathbb{R}^n \;\middle|\; x^T A_1 x + 2b_1^T x + c_1 \le 0 \right\},
\]
it is easy to notice that the following chain of inequalities holds:
\begin{equation}\label{eq: chain}
f_{qq}(x)\ge {\cal L}_{qq}(x,\lambda)\ge L_{qq}(x,\lambda,\sigma) \quad \forall\, x\in C,\,\forall\, \lambda\in\real^+,\,\forall\, \sigma\in\real,
\end{equation}
and the dual function can be obtained as indicated in Definition \ref{def: dual}, i.e., as the infimum of \( L_{qq} \) in \( x \). As in the previous subsection, we can use the first-order conditions of \( L_{qq} \) to express the primal variable as a function of the dual variables:
\begin{equation}\label{eq: xbq}
x=(\sigma A_1 +\lambda A_2 )^\dagger(\sigma b_1+\lambda b_2),
\end{equation}
and notice that the dual feasible set is defined by the constraint: 
$$
\sigma A_1 +\lambda A_2\succeq 0,
$$
and therefore, Assumption \ref{ass: ass1} must be satisfied for this instance as well for the dual set to be non-empty and  have an optimal solution.
Furthermore, we assume that the Slater conditions on the constraint hold, i.e., there exists a \( \hat{x} \) such that:
$$
\hat{x}^TA_2\hat{x}+2b_2^T\hat{x}+c_2<0.
$$
Once we substitute back the value of \( x \) from (\ref{eq: xbq}) into the auxiliary function and use the Schur complement, we obtain  the dual of Problem (\ref{eq: bcqp}) as:
\begin{equation}\label{eq: bcqd}\tag{$D_{qq}$}
\begin{array}{cl}
\displaystyle \max_{\gamma,\lambda,\sigma}  \, \, g_{qq}(\lambda,\sigma,\gamma)=&\gamma-\frac{1}{4}\sigma^2\\
s.t. &\displaystyle \lambda\ge0\\
&
\displaystyle
 \left[\begin{array}{cc}
\sigma A_1 +\lambda A_2 &\sigma b_1+\lambda b_2\\
\left(\sigma b_1+\lambda b_2\right)^T&\sigma c_1+\lambda c_2-\gamma
\end{array}\right]\succeq 0
\end{array}.
\end{equation}
It is easy to notice that, because of the unbroken chain of inequalities in (\ref{eq: chain}), weak duality holds between Problems (\ref{eq: bcqp}) and (\ref{eq: bcqd}). Regarding strong duality, we have the following result:

\begin{theorem}\label{the: sd_qq}
Let Assumption \ref{ass: ass1} and the Slater conditions hold, and let $(\bar\lambda,\bar\sigma,\bar\gamma)$ be a solution of problem (\ref{eq: bcqd}), then there exists a point $\bar{x}\in\real^n$, feasible for Problem (\ref{eq: bcqp}) such that:
$$
f_{qq}(\bar{x})=g_{qq}(\bar\lambda,\bar\sigma,\bar\gamma),
$$
and $\bar{x}$ is the global solution of Problem (\ref{eq: bcqp}).
\end{theorem}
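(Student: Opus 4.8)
The plan is to adapt the proof of Theorem~\ref{the: sdquad} to the present two-multiplier setting. By Assumption~\ref{ass: ass1} the feasible set of~(\ref{eq: bcqd}) is non-empty, and the Slater condition makes~(\ref{eq: bcqp}) feasible, so by the weak-duality chain~(\ref{eq: chain}) the dual value is finite; since~(\ref{eq: bcqd}) is a convex semidefinite program with strictly concave objective, it attains its maximum at $(\bar\lambda,\bar\sigma,\bar\gamma)$ and the KKT conditions hold there (the constraint qualification being handled as in the discussion preceding Theorem~\ref{the: sdquad}). First I would introduce a multiplier matrix $\bar Z\in S_+^{n+1}$ for the linear matrix inequality and a scalar $\bar\mu\ge 0$ for the constraint $\lambda\ge 0$, and write $A_i':=\left[\begin{array}{cc}A_i&b_i\\ b_i^T&c_i\end{array}\right]$ for $i=1,2$, let $E\in S^{n+1}$ be the matrix whose only nonzero entry is a $1$ in position $(n+1,n+1)$, and write $M:=\bar\sigma A_1'+\bar\lambda A_2'-\bar\gamma E$ for the matrix in the LMI of~(\ref{eq: bcqd}). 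Stationarity in $\sigma$, $\lambda$ and $\gamma$ then reads $\mathrm{tr}(\bar Z A_1')=\tfrac12\bar\sigma$, $\mathrm{tr}(\bar Z A_2')=-\bar\mu$ and $\bar Z_{n+1,n+1}=1$, while complementary slackness gives $\mathrm{tr}(\bar Z M)=0$ and $\bar\mu\bar\lambda=0$; these mirror~(\ref{eq: kkt1})--(\ref{eq: kkt4}), the extra pair $(\bar\mu,\bar\lambda)$ stemming from the inequality constraint.

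The first real step is to evaluate the dual optimum in closed form. Expanding $0=\mathrm{tr}(\bar Z M)=\bar\sigma\,\mathrm{tr}(\bar Z A_1')+\bar\lambda\,\mathrm{tr}(\bar Z A_2')-\bar\gamma\,\bar Z_{n+1,n+1}$ and substituting the stationarity relations yields $0=\tfrac12\bar\sigma^2-\bar\lambda\bar\mu-\bar\gamma$; since $\bar\lambda\bar\mu=0$ this forces $\bar\gamma=\tfrac12\bar\sigma^2$, so that $g_{qq}(\bar\lambda,\bar\sigma,\bar\gamma)=\bar\gamma-\tfrac14\bar\sigma^2=\tfrac14\bar\sigma^2$. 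In particular, unlike in Theorem~\ref{the: sdquad}, the matrix $M$ plays no further role: it suffices to exhibit a point $\bar x$ feasible for~(\ref{eq: bcqp}) with $\bar x^TA_1\bar x+2b_1^T\bar x+c_1=\tfrac12\bar\sigma$, because then $f_{qq}(\bar x)=(\tfrac12\bar\sigma)^2=g_{qq}(\bar\lambda,\bar\sigma,\bar\gamma)$ and global optimality of $\bar x$ follows from~(\ref{eq: chain}).

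To construct such an $\bar x$, I would apply the field-of-values identity~(\ref{eq: fov}) to the symmetric matrices $N_1:=A_1'-\tfrac12\bar\sigma E$ and $A_2'$: by the relations above $\mathrm{tr}(\bar Z N_1)=\tfrac12\bar\sigma-\tfrac12\bar\sigma=0$ and $\mathrm{tr}(\bar Z A_2')=-\bar\mu\le 0$, so there exists $\bar z=(\bar v,\bar w)\in\mathbb{R}^{n+1}$ with $\bar z^TN_1\bar z=0$ and $\bar z^TA_2'\bar z=-\bar\mu$. When $\bar w\ne 0$, set $\bar x:=\bar v/\bar w$; dividing the two scalar identities by $\bar w^2$ gives $\bar x^TA_1\bar x+2b_1^T\bar x+c_1=\tfrac12\bar\sigma$ and $\bar x^TA_2\bar x+2b_2^T\bar x+c_2=-\bar\mu/\bar w^2\le 0$, so $\bar x$ is feasible for~(\ref{eq: bcqp}), attains the dual value, and is therefore a global minimizer by~(\ref{eq: chain}); the subcase $\bar\mu>0$ (where $\bar\lambda=0$ and the constraint is slack at $\bar x$) is consistent with complementary slackness.

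The hard part will be the degenerate case $\bar w=0$ (and $\bar z=0$, which can occur only when $\bar\mu=0$), exactly as in the proof of Theorem~\ref{the: sdquad}. I would dispatch it by replacing $\bar w$ with $\epsilon>0$, setting $\bar x(\epsilon)=\bar v/\epsilon$, and letting $\epsilon\to 0$, splitting into subcases according to whether $b_1^T\bar v$ and $b_2^T\bar v$ vanish; subcases in which the perturbed dual objective diverges are excluded using dual feasibility of the LMI together with the finiteness of the dual value (guaranteed by the Slater condition via weak duality), and in the remaining subcases the range condition $\bar\sigma b_1+\bar\lambda b_2\in\mathcal{R}(\bar\sigma A_1+\bar\lambda A_2)$ forced by the Schur complement lets one replace a divergent $\bar\gamma$ by a finite feasible value, contradicting optimality unless no gap is present. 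Pushing this case analysis through while carrying the second quadratic form $A_2'$ and the sign multiplier $\bar\mu$ is where I expect the bulk of the technical work to lie.
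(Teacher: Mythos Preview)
Your proof is correct and follows essentially the same route as the paper: the same KKT system with multiplier matrix $\bar Z$, the same field-of-values pair $\bigl(A_1'-\tfrac12\bar\sigma E,\,A_2'\bigr)$, the same extraction $\bar x=\bar v/\bar w$ with feasibility read off from $\bar z^T A_2'\bar z\le 0$, and the same perturbation sketch for the degenerate case $\bar w=0$. Your early identity $\bar\gamma=\tfrac12\bar\sigma^2$ (hence $g_{qq}=\tfrac14\bar\sigma^2$) is a clean shortcut that the paper reaches only implicitly via its equations~(\ref{eq: gammax})--(\ref{eq: dualinprimal}); it lets you bypass the complementarity identity~(\ref{eq: opts2}) entirely and makes the matching $f_{qq}(\bar x)=(\tfrac12\bar\sigma)^2=g_{qq}$ immediate.
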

\begin{proof}
By using the result on the field of values of two symmetric matrices, there exist $\bar{v} \in \real^n$ and $\bar{w}\in\real$ such that:
\begin{subequations}\label{eq: field0}
\begin{align}
tr\left(
\left[\begin{array}{cc}
A_1 & b_1\\
b_1^T & c_1-\frac{1}{2}\bar\sigma
\end{array}\right]\bar{Z}\right)
=&
\left[\begin{array}{c}
\bar{v}\\
\bar{w}
\end{array}\right]
\left[\begin{array}{cc}
A_1 & b_1\\
b_1^T & c_1-\frac{1}{2}\bar\sigma
\end{array}\right]
\left[\begin{array}{c}
\bar{v}\\
\bar{w}
\end{array}\right],\label{eq: field01}
\vspace{0.5em}
\\
tr\left(
\left[\begin{array}{cc}
A_2 & b_2\\
b_2^T & c_2
\end{array}\right]\bar{Z}\right)
=&
\left[\begin{array}{c}
\bar{v}\\
\bar{w}
\end{array}\right]
\left[\begin{array}{cc}
A_2 & b_2\\
b_2^T & c_2
\end{array}\right]
\left[\begin{array}{c}
\bar{v}\\
\bar{w}
\end{array}\right],
\label{eq: field02}
\end{align}
\end{subequations}

where $\bar{Z}\in S_+^{n+1}$ is the matrix of multipliers associated with the  dual optimal solution $(\bar\lambda,\bar\sigma,\bar\gamma)$.
Consequently, by considering the vector $\bar{x}=\bar{v}/\bar{w} \in \real^n$, the optimality    and the complementarity conditions on the semidefinite constraint in the dual can be written as:

\begin{subequations}\label{eq: opts1s2}
\begin{align}
2tr\left(
\left[\begin{array}{cc}
A_1 & b_1\\
b_1^T & c_1
\end{array}\right]\bar{Z}\right)
=
2\left[\begin{array}{c}
\bar{x}\\
1
\end{array}\right]
\left[\begin{array}{cc}
A_1 & b_1\\
b_1^T & c_1
\end{array}\right]
\left[\begin{array}{c}
\bar{x}\\
1
\end{array}\right]=&\,\,\bar\sigma
,\label{eq: opts1}
\vspace{0.5em}\\
\left[\begin{array}{c}
\bar{x}\\
1
\end{array}\right]
 \left[\begin{array}{cc}
\bar\sigma A_1 +\bar\lambda A_2 &\bar\sigma b_1+\bar\lambda b_2\\
(\bar\sigma b_1+\bar\lambda b_2)^T&\bar\sigma c_1+\bar\lambda c_2-\bar\gamma
\end{array}\right]
\left[\begin{array}{c}
\bar{x}\\
1
\end{array}\right]=&\,\,0,\label{eq: opts2}
\end{align}
\end{subequations}

To prove the feasibility of $\bar{x}$ in the primal it is sufficient to check the optimality conditions in $\lambda$:
$$
\left[\begin{array}{c}
\bar{x}\\
{1}
\end{array}\right]
\left[\begin{array}{cc}
A_2 & b_2\\
b_2^T & c_2
\end{array}\right]
\left[\begin{array}{c}
\bar{x}\\
{1}
\end{array}\right]+\bar{u}=0,
$$
where $\bar{u} \ge 0$ is the optimal multiplier associated with the constraint $\lambda \ge 0$. Because $\bar{u}\ge0$, we must have that
$$
\left[\begin{array}{c}
\bar{x}\\
{1}
\end{array}\right]
\left[\begin{array}{cc}
A_2 & b_2\\
b_2^T & c_2
\end{array}\right]
\left[\begin{array}{c}
\bar{x}\\
{1}
\end{array}\right]\le0,
$$
That is $\bar{x}$ is feasible for the primal problem. Furthermore  the complementarity condition associated with the positivity of $\lambda$ is such that $\bar{u}\bar{\lambda}=0$. Therefore, in the primal, variable $u$ behaves as a slack variable, and when $\bar\lambda>0$, $\bar{u}$ is forced to be zero. This results in:
$$
\bar\lambda (\bar{x}^TA_2 \bar{x}+2b_2^T \bar{x}+c_2)=0,
$$
that is the complementarity condition for the primal problem holds for $\bar{x}$ and $\bar\lambda$.

To prove that the duality gap is zero, we find the value of $\bar{\gamma}$  by using the complementarity conditions in the  (\ref{eq:  opts2}):
\begin{equation}\label{eq: gammax}
\bar{\gamma}= 
 2\left(\left[\begin{array}{c}
\bar{x}\\
1
\end{array}\right]
\left[\begin{array}{cc}
A_1 & b_1\\
b_1^T & c_1
\end{array}\right]
\left[\begin{array}{c}
\bar{x}\\
1
\end{array}\right]\right)^2
+
\bar\lambda
\left[\begin{array}{c}
\bar{x}\\
1
\end{array}\right]
\left[\begin{array}{cc}
A_2 & b_2\\
b_2^T & c_2
\end{array}\right]
\left[\begin{array}{c}
\bar{x}\\
1
\end{array}\right].
\end{equation}
By substituting back in the dual objective function the value of $\bar\gamma$ in the (\ref{eq: gammax}), together with the value of $\bar\sigma$ in the (\ref{eq: opts1}) it is possible to obtain:
\begin{equation}
g_{qq}(\bar\lambda,\bar\sigma,\bar\gamma) = 
 \left(\left[\begin{array}{c}
\bar{x}\\
1
\end{array}\right]
\left[\begin{array}{cc}
A_1 & b_1\\
b_1^T & c_1
\end{array}\right]
\left[\begin{array}{c}
\bar{x}\\
1
\end{array}\right]\right)^2
+
\bar\lambda
 \left[\begin{array}{c}
\bar{x}\\
1
\end{array}\right]
\left[\begin{array}{cc}
A_2 & b_2\\
b_2^T & c_2
\end{array}\right]
\left[\begin{array}{c}
\bar{x}\\
1
\end{array}\right].\label{eq: dualinprimal}
\end{equation}
It is easy to notice that the second term in the (\ref{eq: dualinprimal}) is zero because the primal complementarity condition holds. Therefore, we have:
$$
f_{qq}(\bar{x})=g_{qq}(\bar\lambda,\bar\sigma,\bar\gamma),
$$
consequently $\bar{x}$ is the global solution of Problem (\ref{eq: quar}).

Finally, in the case \( \bar{w} = 0 \), we observe, by the same  argument used in the proof of Theorem~\ref{the: sdquad}, that this implies \( \bar{v} = 0_n \), and thus \( \bar{x} = \bar{v} / \bar{w} \to 0_n \). As shown in that setting, the corresponding dual and primal values coincide, and \( \bar{x} \) is feasible. Therefore, strong duality holds in this case as well.

\end{proof}

Problem (\ref{eq: bcqp}) combines  the duality of the constrained optimization of problem (\ref{eq: quadcon})  with the duality of problem (\ref{eq: quar}) while still preserving the strong duality results. A further generalization of this problem is:
\begin{equation}\label{eq: gbcqp}
\begin{array}{cc}
\displaystyle\min_{x\in\real^n} &\displaystyle \sum_{i=1}^m (x^TA_ix+2b_i^Tx+c_i)^2\vspace{0.5em}\\
\displaystyle s.t. &\displaystyle x^TAx+2b^Tx+c+\le0
\end{array}.
\end{equation}
Which is a NP-hard problem strongly related to one of the formulations discussed in \cite{nest03}.
By using the  theory of implicit convex functions, it is possible to formulate the convex dual of Problem (\ref{eq: gbcqp}):
\begin{equation}\label{eq: gbcqd}
\begin{array}{ll} 
\displaystyle \max_{(\gamma,\lambda,\sigma )} & \displaystyle\gamma-\frac{1}{4}\sum_{i=1}^m\sigma_i^2 \vspace{0.5em}\\
\displaystyle s.t.&
\displaystyle \left[\begin{array}{cc}
\displaystyle \sum_{i=1}^m\sigma_i A_i+\lambda A&\displaystyle \sum_{i=1}^m\sigma_i b_i++\lambda b\\
\displaystyle\left(\sum_{i=1}^m\sigma_i b_i+\lambda b\right)^T&\displaystyle \sum_{i=1}^m\sigma_i c_i+\lambda c_0-\gamma
\end{array}\right]\succeq 0,
\end{array}
\end{equation}
where $\sigma\in\real^m$.  Even though it is still unknown under which conditions it is possible to prove strong duality, or if such conditions exist at all, the value of the optimal solution of Problem (\ref{eq: gbcqd}) offers a lower bound that can be used as a measurement on the quality of the current best primal solution.

A possible direction to clarify when strong duality holds for this problem can be offered by the generalized S-procedure framework of \cite{volle24}. Their results, based on perturbational duality and minimal convexity assumptions, may help characterize conditions under which the duality gap closes in Problem~(\ref{eq: gbcqd}). A detailed analysis in that direction is left for future work.

\section{Numerical Experience}

To validate our theoretical results, we implement a numerical method for solving the dual problems derived in the previous Sections. Our approach is based on a provably convergent  primal-dual interior-point algorithm, which we compare against a well-known conic programming solver. The primal-dual interior-point method is particularly well suited for this problem as it directly exploits the structure of the primal problem while maintaining feasibility in the dual space.

In the previous section, we showed that the dual problems associated with Problems~\eqref{eq: quar} and~\eqref{eq: bcqp} admit optimal solutions in their respective dual feasible sets, and that strong duality holds in both cases. Moreover, we established the existence of primal-dual pairs satisfying the stationarity conditions of the associated Lagrangian functions.
Therefore, the conditions of Theorem~\ref{th: saddle} are met, and we can conclude that:

\begin{itemize}
\item The Lagrangian \( L_{qp}(x, \sigma) \) has a saddle point \( (x_{qp}^*, \sigma_{qp}^*) \in C \times D^+_{qp} \), where:
$$
D^+_{qp} = \{ \sigma \in \mathbb{R} : A_0 + \sigma A_1 \succeq 0 \},
$$
\item The Lagrangian \( L_{qq}(x, \lambda, \sigma) \) has a saddle point \( (x_{qq}^*, \lambda_{qq}^*, \sigma_{qq}^*) \in C \times D^+_{qq} \), where:
$$
D^+_{qq} = \{ (\lambda, \sigma) \in \mathbb{R}_+ \times \mathbb{R} : \sigma A_1 + \lambda A_2 \succeq 0 \}.
$$
\end{itemize}
In both cases, Theorem~\ref{th: saddle} ensures that these saddle points correspond to globally optimal solutions of the respective primal problems, and can be used to numerically recover the primal solution.

%
In the following subsections, we report a strategy to find such global solutions for the two respective problems and test it on randomly generated instances. In the first subsection, we focus on Problem (\ref{eq: quar}) and present a primal-dual interior-point algorithm for its solution, while in the second subsection, we adapt the algorithm for Problem (\ref{eq: bcqp}).

\subsection{Algorithm Implementation for the Quartic Polynomial Problem}
It is possible to exploit the information from both the primal and dual spaces to solve the following saddle point problem:
$$
\min_{x}\max_{\sigma\in D^+_{qp}} L_{qp}(x,\sigma),
$$
this is equivalent to finding a point  that satisfies  the following conditions \cite{ben05}:
\begin{equation}\label{eq: sadop}
\begin{array}{crl}
\Gamma_{qp}(x,\sigma)=&
\left[
\begin{array}{c}
\nabla_x L_{qp}(x,\sigma)\\
-\nabla_\sigma L_{qp}(x,\sigma)\\
\end{array}
\right]&=0_{n+1}\vspace{0.5em}\\
&A_0+\sigma A_1&\succeq 0
\end{array}.
\end{equation}
Where $\Gamma_{qp}:\real^{n+1}\rightarrow\real^{n+1}$ is monotone. 
 To find such a saddle point, we introduce the matrix of surplus variables $W\in S^n_+$ and the matrix of Lagrange multipliers $U\in S^n_+$ for the semidefinite constraints, 
and reformulate the saddle point problem in (\ref{eq: sadop}) as the problem of finding the zero of the following system of constrained equations \cite{mon99}:
\begin{equation}\label{eq: CE}
\begin{array}{l}
H_{qp}(x,\sigma,U,W)=
\left[
\begin{array}{c}
\Gamma_{qp}(x,\sigma)\vspace{0.5em}\\
vec (A_0+\sigma A_1-W)\vspace{0.5em}\\
vec(WU+UW)/2\\
\end{array}
\right]=0_{n+1+2n^2}\vspace{0.5em}\\
(x,\sigma,U,W)\in \real^{n+1}\times S_+^n\times S_+^n
\end{array}.
\end{equation}
To solve the Problem (\ref{eq: CE}), we take inspiration from the algorithms described in \cite{wright_book,dek},  define the set $\Omega_{qp}=\real^{n+1}\times S_+^n\times S_+^n$, and the following potential reduction function $p_{qp}:\Omega_{qp}\rightarrow \real$:
\begin{equation}
p_{qp}(z)=\rho \log \left(\|H_{qp}(z) \|^2    \right)-\log(Det((WU+UW)/2))\vspace{0.5em}, \quad z=(x,\sigma,U,W)\in\Omega_{qp}\\.
\end{equation}
It is easy to notice that in the potential function we have:
$$
\begin{array}{rrl}
\|vec(A_0+\sigma A_1-W)\|&=&\|A_0+\sigma A_1-W \|_F \\
\|vec(WU+UW)/2\|&=&\|(WU+UW)/2 \|_F,
\end{array}
$$
where \(\| \cdot \|_F\) denotes the Frobenius norm, and \(\rho > n + 1\).

At the $k$-th iteration of the algorithm, the direction used in the optimization process is the Newton direction $\Delta\in\real^{n+1+2n^2}$ obtained by solving the linear system:
\begin{equation}\label{eq: new}
JH_{qp}(z^k)\Delta= -H_{qp}(z^k)+\frac{\beta}{n} o^T H_{qp}(z^k)o,
\end{equation}
where $JH_{qp}$ is the Jacobian of $H_{qp}$, $o=(0_{n^2+n+1},1_{n^2})$, 
 and $1_{n^2}\in\real^{n^2}$ is the vector composed of all ones. The second term on the right-hand side of the (\ref{eq: new}) bends the direction toward the interior of $S_+^n$, so that the product  $(WU+UW)/2$ does not approach the boundary too quickly. This  bending  is controlled by the parameter $\beta\in[0,1)$. 

The matrix $JH_{qp}(z)$ is defined as:
$$
JH_{qp}(z)=\left[
\begin{array} {cccc}
2(A_0+\sigma A_1)& 2(A_1 x+b_1)&0_{n,n^2}&0_{n,n^2}\\
-2(A_0+\sigma A_1)^T&1/2&0_{1,n^2}&0_{1,n^2}\\
0_{n^2,n} & vec(A_1) & 0_{n^2,n^2}& -I_{n^2}\\
0_{n^2,n}&0_{n^2,1}& W\otimes W'& U \otimes U' 
\end{array}
\right]
$$
It is easy to notice that the direction $\Delta$ obtained by solving system (\ref{eq: new}) is a vector in $\real^{n+1+2n^2}$. The last $2n^2$ elements of this direction can be reshaped into two $n\times n$ matrices, $\Delta_U$ and $\Delta_W$, which can be directly added to $U^k$ and $W^k$ when moving to the new point. Because the operator considers the term (WU+UW)/2, the matrices $\Delta_U$ and $\Delta_W$ are symmetric, and therefore the sequences of matrices $\{U^k\}$ and $\{W^k\}$ generated by the algorithm are symmetric as well. Furthermore, in the last \(n^2\) rows of the Jacobian, it is possible to observe the vectorized version of the left-hand side of the Lyapunov equations, which in matrix form are given by:
$$
W^k\Delta_U+ \Delta_U W^k+ \Delta_W U^k+  U^k \Delta_W= -(W^k U^k+ U^k W^k)/2.
$$
\begin{algorithm}
\caption{Primal-Dual Interior-point  (PR)}
\begin{algorithmic}[1]
\State Choose $ (x^0,\sigma^0,U^0,W^0) \in \Omega_{qp} $, $\eta\in (0,1)$, $\epsilon>0$, $M\in\mathbb{N}$ and set $ k = 0$.
\While     {$\|\Gamma_{qp}(x^k,\sigma^k)\|>\epsilon$}
\State Choose a scalar $\beta^k \in [0,1),$ and find a solution $\Delta^k=(\Delta_x^k,\Delta_\sigma^k,\Delta_U^k,\Delta^k_W)$ of the  system of linear equations:
$$
JH_{qp}(z^k)\Delta= -H_{qp}(z^k)+\frac{\beta}{n} o^T H_{qp}(z^k)o
$$
\State Compute the value $p_{max}=\max_{i\in\{k-M,\dots,k\}} p(z^i)$ and find a step size $\alpha>0$ such  that  $z^k+ \alpha \Delta^k\in \Omega_{qp}$ and
$$
p(z^k+\alpha \Delta^k)\le p_{max}+\alpha\eta \nabla p(z^k)^T\Delta^k
$$
and set $\alpha^k=\alpha$\vspace{0.5em}

\State Set  $z^{k+1} =z^k+\alpha^k \Delta^k$, $k = k+1 $.
\EndWhile
\State Return the optimal solution $(x^*,\sigma^*,U^*,W^*)$
\end{algorithmic}\label{alg: pr}
\end{algorithm}
The pseudo code of the primal-dual interior point algorithm is reported in Algorithm \ref{alg: pr}. The algorithm starts from a feasible initial point and stops when the norm of the mapping \(\Gamma_{qp}(x^k, \sigma^k)\) is below a certain threshold \(\epsilon > 0\). We use \(\Gamma_{qp}(x, \sigma)\) instead of the full operator \(H_{qp}(z)\) because it is a more accurate measure of optimality. The Newton direction \(\Delta\) is computed according to equation (\ref{eq: new}), but we employ a dimension-reduction technique to solve it. In fact, the system can be written as:
\[
\begin{array}{rl}
2(A_0 + \sigma A_1)\Delta_x + 2(A_1 x + b_1)\Delta_\sigma &= b_x \\
-2(A_0 + \sigma A_1)^T \Delta_x + \frac{1}{2} \Delta_\sigma &= b_\sigma \\
\text{vec}(A_1) \Delta_\sigma - \Delta_W &= b_W \\
(W \otimes W') \Delta_U + (U \otimes U') \Delta_W &= b_U
\end{array}
\]
It is possible to express \(\Delta_\sigma\) as a function of \(\Delta_x\) and substitute it into the first equation. Once both \(\Delta_x\) and \(\Delta_\sigma\) are computed, the computation of \(\Delta_W\) is straightforward. The last direction \(\Delta_U\) is computed by considering the corresponding Lyapunov system of equations in the variable \(\Delta_U\).
For the line search, we opt for an Armijo-type non-monotone line search \cite{gri86} because we have observed a faster convergence rate for the algorithm. We allow the primal and dual variables to have different step sizes, as the primal variables are not constrained. At line 5, the current point is updated.

Algorithm \ref{alg: pr} is implemented in MATLAB, and the numerical solution for the linear system to compute \(\Delta_x\) is performed using the function \(mldivide\), while the direction \(\Delta_U\) is computed using the function \(lyap\). Both methods benefit significantly from the dimension reduction, which greatly reduces the main computational cost of the algorithm.

The potential reduction algorithm is well-defined as long as the Jacobian is not singular. This occurs if the matrix \(A_0 + \sigma A_1\) and the product \(WU\) are both non-singular. In the algorithm, both the potential function and the line search ensure that the sequence of points generated remains in \(\Omega_{qp}\), which guarantees that these two matrices remain in the interior of \(S_+^n\) and are positive definite. 

The convergence properties of Algorithm \ref{alg: pr} follow from the results in \cite{wright_book,dek} and are similar to those of the algorithm presented in \cite{snl19}. While a full convergence analysis is beyond the scope of this paper, the method leverages established techniques that ensure provable convergence. Moreover, since the algorithm follows an interior-point framework, its time complexity is polynomial.


\begin{table}[ht]
\begin{center}
\begin{small}
\setlength\tabcolsep{2pt}
\begin{tabular}{l|l|cccc}
\hline
n	&	Cond	&	Avg. Gap	&	Avg. Gap	&	AVG. Time	&	AVG. Time	\\
	&		&	 PR	&	 SDPT3	&	PR (sec)	&	SDPT3 (sec)	\\
\hline
100	&	10	&	4.32E-04	&	1.44E-03	&	0.28	&	0.64	\\
	&	100	&	9.39E-08	&	1.27E-05	&	0.26	&	0.58	\\
\hline
200	&	10	&	2.92E-07	&	6.32E-06	&	0.84	&	1.27	\\
	&	100	&	2.27E-06	&	7.54E-06	&	0.87	&	1.31	\\
\hline
300	&	10	&	1.37E-07	&	9.88E-06	&	2.20	&	2.87	\\
	&	100	&	5.82E-06	&	1.26E-05	&	2.25	&	2.92	\\
\hline
400	&	10	&	5.81E-07	&	7.40E-06	&	5.14	&	6.08	\\
	&	100	&	1.30E-05	&	1.40E-05	&	4.73	&	6.07	\\
\hline
500	&	10	&	8.75E-07	&	4.70E-06	&	7.43	&	10.92	\\
	&	100	&	8.83E-07	&	2.14E-05	&	5.99	&	10.90	\\
\hline
\end{tabular}
\end{small}
\end{center}
\caption{Results of the experiments for problem (\ref{eq: quar}), varying the size $n$ of the generated problems and the condition number of the matrices. The reported results are an average over ten runs.}\label{tab: quar}
\end {table}

The potential reduction algorithm is compared with the SDPT3 solver \cite{sdpt1, sdpt2}, which is implemented in C with a MATLAB interface provided by CVX \cite{cvx}. Algorithm SDPT3 solves Problem (\ref{eq: quard}), returning both the dual solution \((\tilde\sigma^*, \tilde\gamma^*)\) and the matrix of multipliers \(\tilde{X}^*\). 

To compare the two methods, we apply the singular value decomposition to the matrix \(\tilde{X}^*\) to find the vector \(\tilde{x}^*\) such that \(\tilde{X}^* = \tilde{x}^*\tilde{x}^{*T}\). Then, we compute the gap as:
\[
gap_{SDPT3} = \left(\tilde{x}^{*T} A_1 \tilde{x}^{*} + 2b_1^T \tilde{x}^{*} + c_1 \right)^2 - \frac{\tilde\sigma^{*2}}{4}
\]
and compare it with the gap obtained from the potential reduction (PR) algorithm, computed as:
\[
gap_{PR} = \left(x^{*T} A_1 x^{*} + 2b_1^T x^{*} + c_1 \right)^2 - \frac{\sigma^{*2}}{4}
\]
We use this gap because it is a measure of optimality derived from equation (\ref{eq: fenrel1}) in Theorem \ref{the: fen_in}. Moreover, this gap does not consider the variable \(\gamma\), which is not computed by Algorithm \ref{alg: pr}, and it is also valid for Problem (\ref{eq: bcqp}).

As previously mentioned, the potential reduction algorithm is implemented in MATLAB, while SDPT3 is written in C with a MATLAB interface. Although the most computationally expensive parts of Algorithm \ref{alg: pr} are performed efficiently using the functions $mldivide$ and $lyap$, MATLAB is an interpreted language and is generally slower than highly optimized C code. Both algorithms  run in a Windows environment on a machine equipped with an Intel i7-11750H processor and 64 GB of RAM, using MATLAB 2021a.

The parameter values used are \(\eta = 10^{-6}\), \(\epsilon = 10^{-4}\), \(\rho = 2(n+1)\), and \(M = 5\). The results are summarized in Table \ref{tab: quar}. For this test, we increased the size of the primal problem \(n\) from 100 to 500. Additionally, we varied the condition number of the generated matrices to assess whether problems with more ill-conditioned matrices are more challenging to solve. The reported results are averages over ten runs with different random number generators. Matrix \(A_0\) is indefinite, while matrix \(A_1\) is positive definite. This choice ensures that Assumption \ref{ass: ass1} is satisfied without the two matrices  being related by simultaneous diagonalization.  

It is evident that both methods are capable of bringing the gap close to zero, indicating that the returned solutions are near the global optimum. When comparing the two algorithms, we observe that the potential reduction primal-dual algorithm is both faster and more effective at reducing the gap than SDPT3. This is likely due to the fact that the potential reduction algorithm directly exploits duality theory, and in the primal space, it only needs to consider a vector of \( \mathbb{R}^n \) variables, as opposed to a matrix of \( \mathbb{R}^{n \times n} \) variables that must be constrained to be positive definite. Furthermore, increasing the conditioning number of the two matrices generally results in lower precision.

\subsection{Algorithm Implementation for the Quadratically-Constrained Quartic Polynomial Problem}
Algorithm \ref{alg: pr} can be easily adapted to solve Problem (\ref{eq: bcqp}). For this problem, we define the operator as follows:
$$
\begin{array}{l}
H_{qq}(x,\lambda,\sigma,w,U,W)=
\left[
\begin{array}{c}
\Gamma_{qp}(x,\lambda,\sigma) \vspace{0.5em} \\
x^T A_2 x + 2 b_2^T x + c_2 + w \vspace{0.5em} \\
\text{vec}(\sigma A_1 + \lambda A_2 - W) \vspace{0.5em} \\
\lambda w \vspace{0.5em} \\
\text{vec}(WU + UW)/{2} \\
\end{array}
\right] = 0_{n+3+2n^2} \vspace{0.5em} \\
(x, \lambda, \sigma, w, U, W) \in \Omega_{qq}
\end{array},
$$
where \(w \geq 0\) is the slack variable for the primal quadratic constraint. The set \(\Omega_{qq}\) is defined as:
$$
\Omega_{qq} = \mathbb{R}^{n+1} \times \mathbb{R}_+ \times \mathbb{R}_+ \times S_+^n \times S_+^n,
$$
and the operator \(\Gamma_{qq}(x,\lambda,\sigma)\) is given by:
$$
\Gamma_{qq}(x,\lambda,\sigma) =
\left[
\begin{array}{c}
\nabla_x L_{qp}(x,\lambda,\sigma) \\
-\nabla_\sigma L_{qp}(x,\lambda,\sigma) \\
\end{array}
\right].
$$
The Jacobian associated with the operator \(H_{qq}\), where \(z = (x, \lambda, \sigma, w, U, W)\), is:
$$
JH_{qq}(z) =
\left[
\begin{array}{cccccc}
2(\sigma A_1 + \lambda A_2) & 2(A_2 x + b_2) & 2(A_1 x + b_1) & 0_{1,n} & 0_{n,n^2} & 0_{n,n^2} \\
2(A_2 x + b_2)^T & 0 & 0 & -1 & 0_{1,n^2} & 0_{1,n^2} \\
-2(A_0 + \sigma A_1)^T & 0 & \frac{1}{2} & 0 & 0_{1,n^2} & 0_{1,n^2} \\
0_{n^2,n} & \text{vec}(A_2) & \text{vec}(A_1) & 0_{n^2,1} & 0_{n^2,n^2} & -I_{n^2} \\
0 & w & 0 & \lambda & 0_{1,n^2} & 0_{1,n^2} \\
0_{n^2,n} & 0_{n^2,1} & 0_{n^2,1} & 0_{n^2,1} & W \otimes W' & U \otimes U'
\end{array}
\right].
$$
The potential function we consider for this problem is similar to \(p_{qp}(z)\) and is defined as:
$$
p_{qq}(z) = \rho \log \left(\| H_{qq}(x, \lambda, \sigma, w, U, W) \|^2 \right) - \log \left( \text{Det} \left( \frac{WU + UW}{2} \right) \right) - \log(\lambda w).
$$

The main difference when  Algorithm \ref{alg: pr} is adapted to this problem is the introduction of a barrier term for the complementarity conditions of the quadratic constraint. Consequently, Algorithm \ref{alg: pr} can be applied to the problem with the necessary adjustments to the operator, the computation of the direction, and the potential function. Additionally, the same dimension reduction technique can be used for computing the Newton direction. Specifically, a linear system of size \(n\) can be solved to compute \(\Delta_x\), and this can be used to determine \(\Delta_\sigma\), \(\Delta_\lambda\), \(\Delta_w\), and \(\Delta_W\). Furthermore, a Lyapunov system can be employed to compute \(\Delta_U\).

\begin{figure}[ht]
    \centering
    \begin{subfigure}[t]{0.48\textwidth}
        \centering
        \includegraphics[width=\textwidth]{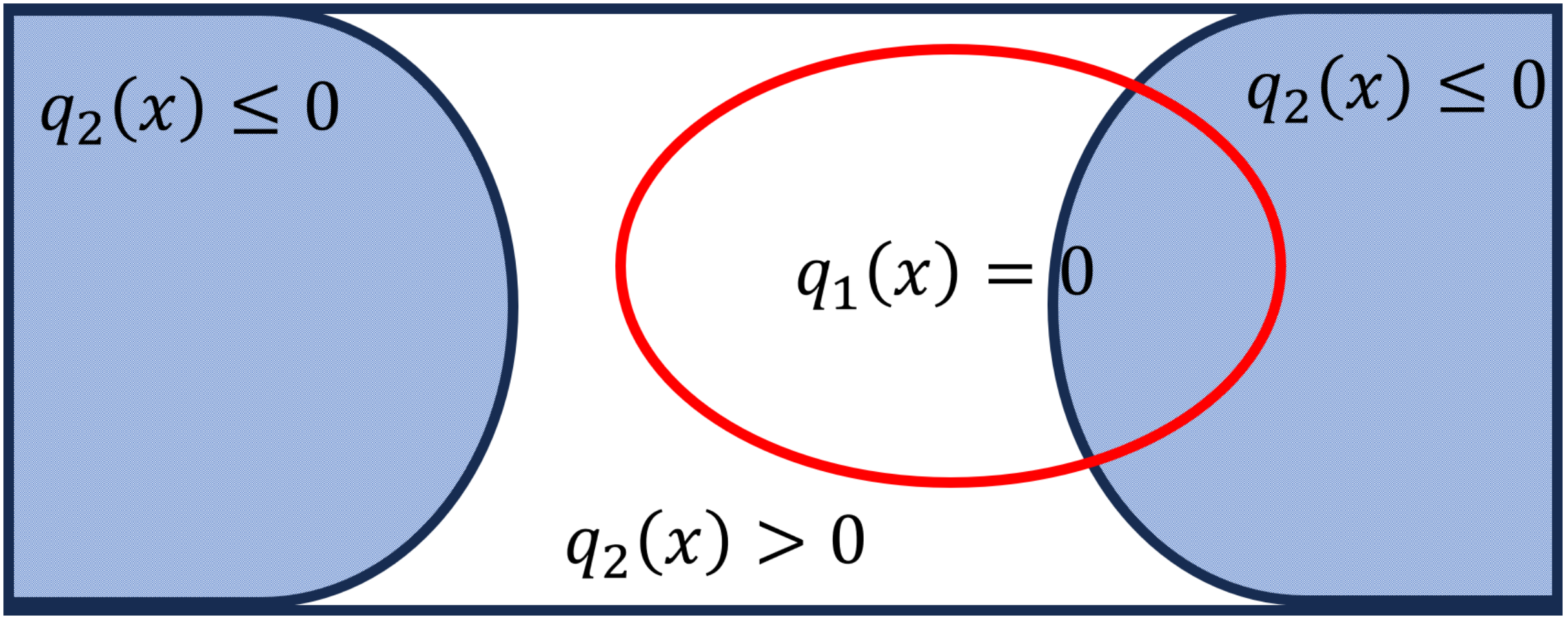} 
        \caption{Two dimensional example of a problem with $A_2$ indefinite.}
        \label{fig:subfig1}
    \end{subfigure}
    \begin{subfigure}[t]{0.48\textwidth}
        \centering
        \includegraphics[width=\textwidth]{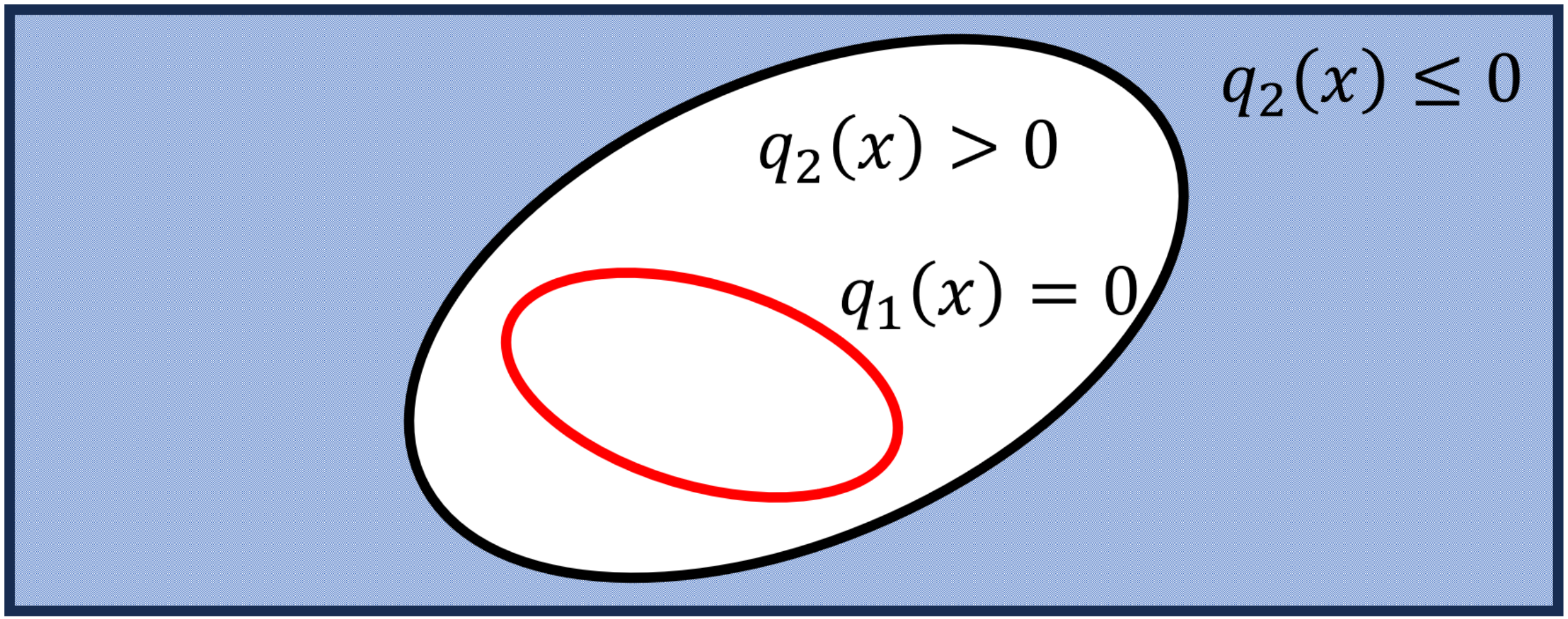} 
        \caption{Two dimensional example with $A_2$ negative definite.}
        \label{fig:subfig2}
    \end{subfigure}
    \caption{Two dimensional examples of the considered test cases for Problem (\ref{eq: bcqp}). The blue filled space represents the feasible set, the red ellipse represents level curve of the objective function such that $q_1(x)=0$.}
    \label{fig:mainfig}
\end{figure}

The tests performed for Problem (\ref{eq: bcqp}) fall into two categories:
\begin{enumerate}
\item The first category consists of \(A_1\) being positive definite and \(A_2\) indefinite;
\item The second category consists of \(A_1\) being positive definite and \(A_2\) negative definite.
\end{enumerate}
This choice is made because, in the case where \(A_1\) is positive definite and \(A_2\) is indefinite, the value of \(f_{qq}\) at the optimum would, in most cases, be zero. Specifically, if we define
$$
q_1(x) = x^T A_1 x + 2 b_1^T x + c_1, \quad q_2(x) = x^T A_2 x + 2 b_2^T x + c_2,
$$
and there exists a point \(\bar{x} \in \mathbb{R}^n\) such that \(q_1(\bar{x}) = 0\) and \(q_2(\bar{x}) < 0\), then this point would be optimal for the problem, as illustrated in Figure \ref{fig:subfig1}, where the points of the red ellipse inside the feasible region are all optimal. This implies that, in general, at the optimum, both the value of the multiplier \(\bar{\lambda}\) and the dual variable \(\bar{\sigma}\) are zero because:
\begin{itemize}
\item The constraint \(q_2(x) \leq 0\) is inactive;
\item The optimality condition for \(\sigma\) is  \(\sigma = 2 q_1(x)\), and $q_1(\bar{x}) = 0$.
\end{itemize}

Therefore, we  consider the second type of tests to examine how the algorithm behaves when both variables do not tend to zero at the optimum. For these tests, we randomly generate the positive definite matrices \(A_0\) and \(A_1\), along with the vectors \(b_0\) and \(b_1\). The values of \(c_0\) and \(c_1\) are then computed such that the level curve \(q_1(x) = 0\) lies inside the ellipse defined by \(x^T A_0 x + 2 b_0^T x + c_0 = 0\). We then set \(A_2 = -A_0\), \(b_2 = -b_0\), and \(c_2 = -c_0\), so that the ellipse \(q_1(x) = 0\) is outside the feasible region, as shown in Figure \ref{fig:subfig2}. In this configuration, not only is the global solution on the boundary of the feasible set, but also the value of \(\sigma^*\) is  nonzero. Consequently, the values of \(\sigma\) must compensate for the values of \(\lambda\) so that \(\sigma A_1 + \lambda A_2 \) remains positive definite during the execution of the algorithm.

\begin{table}[ht]
\small
    \centering
    \begin{subtable}[t]{0.48\textwidth}
        \centering
\setlength\tabcolsep{3pt}
        \begin{tabular}{l|l|ccc}
        \hline
n	&	Cond	&	Average	&	Average dual	&	Time	\\
	&		&	Gap PR	&	KKT PR	&	PR		\\
\hline
100	&	10	&	9.64E-06	&	1.43E-02	&	0.05	\\
	&	100	&	3.46E-09	&	1.75E-03	&	0.03	\\
200	&	10	&	1.51E-06	&	5.67E-02	&	0.09	\\
	&	100	&	6.26E-06	&	7.83E-03	&	0.18	\\
300	&	10	&	1.24E-08	&	1.40E-02	&	0.21	\\
	&	100	&	4.89E-08	&	9.45E-03	&	0.30	\\
400	&	10	&	6.88E-09	&	1.29E-02	&	0.47	\\
	&	100	&	1.05E-06	&	3.08E-03	&	1.33	\\
500	&	10	&	3.16E-06	&	4.95E-02	&	1.08	\\
	&	100	&	5.11E-06	&	5.89E-03	&	1.97	\\
        \hline
        \end{tabular}
        \caption{Experiments for problem (\ref{eq: bcqp}) with $A_2$ indefinite.}
        \label{tab: bcqp2}
    \end{subtable}
    \hfill
    \begin{subtable}[t]{0.48\textwidth}
        \centering
\setlength\tabcolsep{3pt}
        \begin{tabular}{l|l|cccc}
        \hline
 n	&	Cond	&	Average	&	Average dual	&	Time		\\
	&		&	Gap PR	&	KKT PR	&	PR		\\
\hline
100	&	10	&	5.93E-07	&	4.27E-07	&	0.09	&\\
	&	100	&	3.55E-06	&	6.88E-09	&	0.08	&	\\
200	&	10	&	3.75E-02	&	2.90E-02	&	0.40	&	\\
	&	100	&	8.37E-06	&	1.00E-05	&	0.41	&\\
300	&	10	&	1.54E-06	&	4.15E-01	&	1.87	&	\\
	&	100	&	1.06E-05*	&	4.96E-08*	&	2.18	&	\\
400	&	10	&	2.71E-06	&	1.59E-02	&	3.99	&	\\
	&	100	&	1.04E-05*	&	1.96E-06*	&	3.09	&	\\
500	&	10	&	1.32E-06*	&	1.89E-05*	&	5.66	&	\\
	&	100	&	5.85E-06	&	1.30E-03	&	8.33	&	\\
        \hline
        \end{tabular}
        \caption{Experiments for problem (\ref{eq: bcqp}) with $A_2$ negative definite. }
        \label{tab: bcqp1}
    \end{subtable}
    \caption{Comparison of experiments for problem (\ref{eq: bcqp}) under different conditions. The results are the average over ten runs. The asterisk indicates that the PR algorithm failed for some of the instances in that group, for a total of four instances in the second type of tests. Those instances are not considered in the computation of the reported averages.}
    \label{tab:bcqp}
\end{table}

In the tests for the Quadratically-Constrained Quartic Polynomial problem, SDPT3 is unable to return a  solution with satisfactory gap for the majority of instances. We also tested SeDuMi \cite{sed} with the CVX interface, but it similarly fails in most cases. The results from both solvers indicate significant numerical issues. Specifically, as the methods approach optimality, the matrix \(\sigma A_1 + \lambda A_2 \) becomes ill-conditioned, leading to numerical instability that prevents further progress toward closing the duality gap.

It is worth noting that even the proposed primal-dual interior-point method encounters failures in four out of 200 instances, highlighting the inherent difficulty of these problems. However, this failure rate is considerably lower than that observed with standard conic solvers, which frequently fail due to numerical instability.

For these reasons, in Table \ref{tab:bcqp}, we report only the results for the potential reduction algorithm. However, to further assess the quality of the solutions, we include the squared norm of the KKT conditions of the dual problem (\ref{eq: bcqd}), which demonstrates that the returned solution is a good approximation of the dual global optimum.

For these tests, we set \(M = 10\). As expected, the first type of instances is easier to solve than the second, both in terms of precision and computation time. Notably, the second type of test is at least twice as difficult as the first. The squared norm of the KKT conditions for the dual problem remains quite close to zero on average, confirming that the returned values of \(\lambda\) and \(\sigma\) are sufficiently close to the global optimum. Furthermore, all the returned  points satisfy the primal feasibility conditions, even in the instances where the algorithm fails.

We also note that MATLAB occasionally generates warnings about matrix ill-conditioning during the computation of the Newton direction in the potential reduction algorithm when close to optimality. This further highlights the numerical challenges posed by these problems. Nevertheless, the potential reduction algorithm successfully closes the optimality gap in the majority of tested instances.

Finally, the fact that Algorithm \ref{alg: pr} is able to drive both the primal-dual gap and the norm of the KKT conditions of the dual problem close to zero serves as an empirical validation of the theoretical results presented in the previous sections, particularly the strong duality results.


\section{Conclusions}
In this paper, we introduce a duality theory for a class of composite functions, referred to as implicit convex functions. These functions are formed by combining a convex function with a continuous one. By leveraging the property that a modified version of the Fenchel inequality holds for these functions, even in the non-convex case, we define an auxiliary function that is used to formulate the dual problem. From this formulation, weak duality follows naturally. Additionally, we explore the relationship between this duality theory and classical Lagrangian duality in optimization. We first present a constrained problem equivalent to the unconstrained one and then demonstrate how the optimality conditions of both problems are interconnected.

In the second part of the paper, we present strong duality results for two specific instances characterized by implicit convex functions. For the fourth-order polynomial problem, we compare its dual formulation to that of the classic single-constraint quadratic problem, highlighting the differences between these two approaches. In the Quadrically-Constrained Quartic Polynomial problem, we demonstrate how implicit convex duality and Lagrange duality can be combined effectively.

Finally, we introduce a primal-dual interior-point algorithm for the numerical solution of small to medium-sized, randomly generated instances of the two analyzed problems, for which strong duality has been established. We compare this algorithm with a well-established solver. In nearly all instances, the proposed algorithm successfully drives the primal-dual gap close to zero, even when the benchmark solver fails, thereby finding the global solutions for the considered instances and empirically validating the theory presented in this paper.

The technique introduced in this paper offers significant utility for practitioners in several ways. It can be applied to other optimization problems that can be expressed in the implicit convex form, allowing for the formulation of their duals and, in some cases, enabling the proof of strong duality results. As such, the theory of implicit convex functions becomes a valuable tool in the field of hidden convex optimization. From an algorithmic perspective, the dual formulation provides a lower bound for the solution of the current primal incumbent, and in cases where strong duality holds, tailored algorithms can leverage duality to find the global minimum. This opens the door to novel applications and further exploration of new problems and algorithms in optimization.

\section*{Funding}
This research has been jointly funded  by Progetto MolisCTe, Ministero delle Imprese e del Made in Italy, CUP: D33B22000060001 and 
by the Italian Ministry of University and Research (MUR) within the D.M. 10/08/2021 n.1062 (PON Ricerca e Innovazione)

\section*{Data Availability Statement}
The datasets generated during and/or analyzed during the current study are available from the corresponding author on reasonable request.

\section*{Declarations}
\subsection*{Conflict of Interest (COI) Statement}
The author declares that he has no competing interests.

\appendix
\section{Technical Proofs} \label{ap: proof}

We  begin with some the technical properties of convex functions and their conjugates that can be found in Section 23 of \cite{rock} and Proposition 11.3 at page 476 of \cite{rock09}, that are used extensively in the reported proofs:

\begin{proposition}\label{the: convprop}
Let $V: \real^m \rightarrow \oreal $ be a proper, convex and lsc function. Let $V^*$ be its convex conjugate. Then, 
for every \( \bar{y} \in \operatorname{dom}(V) \), 
there exists a point $\bar\sigma\in\real^m$ such that $\bar\sigma$ is a subgradient of V in $\bar{y}$ and the following relations are equivalent:
\begin{equation}\label{eq: fenrel}
\bar{y} \in \partial V^*( \bar{\sigma}) 
\quad \Longleftrightarrow \quad  
 \bar\sigma \in \partial V(\bar{y})  
\quad \Longleftrightarrow \quad
V(\bar{y})+V^*( \bar{\sigma})-\langle \bar{y} ,\bar\sigma\rangle=0.
\end{equation}
Moreover we have $\partial V^*=(\partial V)^{-1}$ and $\partial V=(\partial V^*)^{-1}$.
\end{proposition}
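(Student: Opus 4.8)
The plan is to treat this as the classical conjugate–subgradient (Fenchel–Young) duality, where the only non-routine ingredients are two facts already granted in the excerpt: the Fenchel–Young inequality $V(y)+V^*(\sigma)\ge\langle y,\sigma\rangle$, which holds for all $y,\sigma$ straight from Definition~\ref{def: concon}, and the biconjugation identity $V^{**}=V$, valid because $V$ is proper, convex and lsc. First I would dispatch the existence of a subgradient: for $\bar y$ in the relative interior of $\operatorname{dom}(V)$ the epigraph $\operatorname{epi}(V)$ is convex and $(\bar y,V(\bar y))$ is a boundary point, so the supporting-hyperplane theorem produces a non-vertical supporting hyperplane whose slope is by definition a subgradient $\bar\sigma\in\partial V(\bar y)$; this is exactly Theorem~23.4 of \cite{rock}. (At points of $\operatorname{dom}(V)$ that are not relative-interior points the subdifferential may be empty, so the existence clause is to be read in that standard sense.)

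Next I would prove the equivalence $\bar\sigma\in\partial V(\bar y)\Longleftrightarrow V(\bar y)+V^*(\bar\sigma)-\langle\bar y,\bar\sigma\rangle=0$. The subgradient inequality $V(z)\ge V(\bar y)+\langle\bar\sigma,z-\bar y\rangle$ for all $z$ rearranges to $\langle\bar\sigma,z\rangle-V(z)\le\langle\bar\sigma,\bar y\rangle-V(\bar y)$ for all $z$; taking the supremum over $z\in\real^m$ gives $V^*(\bar\sigma)\le\langle\bar y,\bar\sigma\rangle-V(\bar y)$, i.e. $V(\bar y)+V^*(\bar\sigma)\le\langle\bar y,\bar\sigma\rangle$, and combined with Fenchel–Young this forces the equality in (\ref{eq: fenrel}). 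Conversely, if that equality holds then for every $z$ we have $\langle\bar\sigma,z\rangle-V(z)\le V^*(\bar\sigma)=\langle\bar\sigma,\bar y\rangle-V(\bar y)$, which is precisely the subgradient inequality, so $\bar\sigma\in\partial V(\bar y)$. (Along the way one checks that the equality condition automatically makes both $V(\bar y)$ and $V^*(\bar\sigma)$ finite, so no $\pm\infty$ bookkeeping is needed.)

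Then I would obtain the third equivalence purely by symmetry: applying the step above with $V^*$ in place of $V$ and invoking $V^{**}=V$ yields $\bar y\in\partial V^*(\bar\sigma)\Longleftrightarrow V^*(\bar\sigma)+V^{**}(\bar y)-\langle\bar\sigma,\bar y\rangle=0\Longleftrightarrow V^*(\bar\sigma)+V(\bar y)-\langle\bar y,\bar\sigma\rangle=0$, the same equality condition as before. Chaining the two equivalences closes the cycle in (\ref{eq: fenrel}). Finally, the relation $\bar\sigma\in\partial V(\bar y)\Longleftrightarrow\bar y\in\partial V^*(\bar\sigma)$, read as a statement about the graphs of the set-valued maps $\partial V$ and $\partial V^*$, is exactly $\partial V^*=(\partial V)^{-1}$; repeating the argument with $V^*$ in the role of $V$ (again using $V^{**}=V$) gives $\partial V=(\partial V^*)^{-1}$.

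The only delicate point I anticipate is the existence clause, and it is delicate solely because of boundary points of $\operatorname{dom}(V)$; everywhere else the argument is a direct rearrangement of the defining inequalities together with the already-granted facts that $V^*$ is the conjugate and $V^{**}=V$, so I expect no further obstacle.
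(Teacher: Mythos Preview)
Your argument is correct and is precisely the standard Fenchel--Young/biconjugation proof; there is no gap beyond the boundary-point caveat you already flagged. However, the paper does not actually supply its own proof of this proposition: it is stated in Appendix~\ref{ap: proof} with the preface that these are ``technical properties of convex functions and their conjugates that can be found in Section~23 of \cite{rock} and Proposition~11.3 at page~476 of \cite{rock09},'' and is then used as a black box in the proof of Theorem~\ref{the: fen_in}. So there is nothing to compare against---you have written out the classical argument the paper merely cites, and your identification of the existence clause as the only delicate point (requiring $\bar y$ in the relative interior of $\operatorname{dom}(V)$, per Theorem~23.4 of \cite{rock}) is exactly the standard qualification.
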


\noindent
\fen*

\begin{proof}

Let $\bar{y} = \Phi(\bar{x})$. By the premise of the theorem, $\bar{y}$ is in the effective domain of $V$. Since $V$ is a proper, convex, and lower semi-continuous function, the subdifferential $\partial V(\bar{y})$ is non-empty. Therefore, there exists a subgradient $\bar{\sigma} \in \partial V(\bar{y})$.  Consequently, the following relations hold from Proposition \ref{the: convprop}:
$$
\bar{y}\in \partial V^*( \bar{\sigma}) 
\Longleftrightarrow  
 \bar\sigma\in \partial V(\bar{y})  
\Longleftrightarrow
V(\bar{y})+V^*( \bar{\sigma})-\langle \bar{y} ,\bar\sigma\rangle=0.
$$
These identities follow from standard duality theory for convex conjugate pairs, based on the Fenchel–Young equality and the Fenchel–Moreau theorem. In our setting, they remain valid due to the assumptions made on implicit convex functions, which ensure that V satisfies \( V^{**} = V \).
Substituting $\bar{y}=\Phi(\bar{x})$ into these equivalent relations yields the desired result:
$$
\Phi(\bar{x})\in \partial V^*( \bar{\sigma}) 
\Longleftrightarrow  
 \bar\sigma\in \partial V(\Phi(\bar{x}))  
\Longleftrightarrow
V(\Phi(\bar{x}))+V^*( \bar{\sigma})-\langle \Phi(\bar{x}) ,\bar\sigma\rangle=0.
$$
This proves the equivalence stated in Equation (\ref{eq: fenrel1}), completing the proof.

\end{proof}

\corr*
\begin{proof}

Let \( \bar{x} \) be a stationary point of \( f(x) = V(\Phi(x)) \), i.e., \( 0 \in \partial f(\bar{x}) \).  
The dual point $\bar{\sigma}$ is such that \( \bar{\sigma} \in \partial V(\Phi(\bar{x})) \), forming a primal-dual Fenchel pair with \( \bar{x} \), as ensured by Theorem~\ref{the: fen_in}.  
Then, by the properties of the Fenchel conjugate:
\[
\bar{\sigma} \in \partial V(\Phi(\bar{x})) \quad \Leftrightarrow \quad \Phi(\bar{x}) \in \partial V^*(\bar{\sigma}),
\]
which implies:
\[
0 \in   \Phi(\bar{x})-\partial V^*(\bar{\sigma}) = \partial_\sigma L(\bar{x}, \bar{\sigma}).
\]

Moreover, by the generalized chain rule for subdifferentials (see Theorem 10.49 in~\cite{rock09}), we have:
\[
\partial f(\bar{x}) \subset \bigcup \left\{ \partial (\sigma \circ \Phi)(\bar{x}) \mid \sigma \in \partial V(\Phi(\bar{x})) \right\}.
\]
Since \( \bar{\sigma} \in \partial V(\Phi(\bar{x})) \), and \( 0 \in \partial f(\bar{x}) \), it follows that:
\[
0 \in \partial(\bar{\sigma} \circ \Phi)(\bar{x}) = \partial_x L(\bar{x}, \bar{\sigma}).
\]

Therefore, both stationarity conditions for the Lagrangian \( L(x, \sigma) \) are satisfied at the point \( (\bar{x}, \bar{\sigma}) \), and we conclude that \( (\bar{x}, \bar{\sigma}) \) is a stationary point of \( L \).

Finally, the (\ref{eq: valcorr}) comes from the third in (\ref{eq: fenrel1}):
$$
V(\Phi(\bar{x})) +V^*(\bar\sigma)-\langle \Phi(\bar{x}),\bar\sigma \rangle =0   \Rightarrow f(\bar{x})-L(\bar{x},\bar\sigma)=0,
$$
Proving the theorem.

\end{proof}
\corrbis*
\begin{proof}
Let $(\bar{x},\bar{\sigma})$ be a stationary point of the auxiliary function $L$. 
It is easy to notice that the second condition in (\ref{eq: ld1}) implies that the point $(\bar{x},\bar\sigma)$ satisfies the condition  $\Phi(\bar{x}) \in \partial V^*(\bar{\sigma})$,
and therefore from Theorem \ref{the: fen_in} we have:
$$
\bar\sigma\in\partial V(\Phi(\bar x)),
$$
by the extended chain rule for subgradients we have that:
$$
\partial f(x)\subseteq\bigcup \left\{\partial (\sigma\circ \Phi)({\bar{x}})| \sigma \in\partial V(\Phi( \bar{x}))\right\}.
$$
Note that we use $\subseteq$ instead of $\subset$ because of the singleton assumption.
By the fact that $\bar\sigma\in\partial V(\Phi(\bar x))$ and that $\bar\sigma$ is the only element in $\partial V(\Phi(\bar x))$, we have
$$
\partial f(\bar{x})= \partial(\bar{\sigma} \circ \Phi)(\bar{x}),
$$
and therefore
$$
 0 \in\partial f(\bar{x}).
$$
proving that $\bar x$ is stationary for $f$. The fact that 
$$
f(\bar{x}) = L(\bar{x}, \bar{\sigma}).
$$
follows from the third condition in in (\ref{eq: fenrel1}).
\end{proof}

\kktcon*
\begin{proof}
We start assuming that the point $(\bar{x},\bar\sigma)\in\real^{n\times m}$  satisfies the conditions (\ref{eq: ld1}).
It is easy to notice that condition (\ref{eq: id21}) is the same as the first condition in (\ref{eq: ld1}). 
From the fact that the point $(\bar{x},\bar\sigma)$ satisfies the conditions in (\ref{eq: ld1}), such point also satisfies  the relations in (\ref{eq: fenrel1}) that we report here for convenience:
\begin{equation}\label{eq: fenrel12}
\Phi(\bar{x}) \in \partial V^*(\bar{\sigma}) 
\Longleftrightarrow  
\bar{\sigma} \in \partial V(\Phi(\bar{x}))  
\Longleftrightarrow
V(\Phi(\bar{x})) + V^*(\bar{\sigma}) - \langle \Phi(\bar{x}), \bar{\sigma} \rangle = 0.
\end{equation}
The point $\bar{y}\in\real^m$ can be easily determined by setting $\bar{y}=\Phi(\bar{x})$, therefore satisfying condition  \ref{eq: id23}. This choice of $\bar{y}$ also makes the second condition in (\ref{eq: ld1}) equivalent to the (\ref{eq: id22}) through this chain of relations:

\begin{align*}
0 \in \Phi(\bar{x}) - \partial V^*(\bar{\sigma})
&\Leftrightarrow \Phi(\bar{x}) \in \partial V^*(\bar{\sigma}) \\
&\Leftrightarrow \bar{\sigma} \in \partial V(\Phi(\bar{x})) \\
&\Leftrightarrow \bar{\sigma} \in \partial V(\bar{y}) \\
&\Leftrightarrow 0 \in \partial V(\bar{y}) - \bar{\sigma}.
\end{align*}
Making the point $(\bar{x},\bar{y},\bar\sigma)$ stationary for ${\cal L}_c$.

For the sufficiency, the first condition in (\ref{eq: ld1}) easily follows, since the two conditions match, while the second in (\ref{eq: ld1}) is satisfied by setting $\Phi(\bar{x})=\bar{y}$ and using         that $\bar\sigma\in\partial V(\bar{y}))\Leftrightarrow \bar{y}\in \partial V^*(\bar\sigma)$. Hence,  $\Phi(\bar{x})\in\partial V^*(\bar\sigma)$, which implies:
$$
0 \in  \Phi(\bar{x})-\partial V^*(\bar{\sigma}) = \partial_\sigma L(\bar{x}, \bar{\sigma}),
$$
Concluding the equivalence.
\end{proof}

\saddle*
\begin{proof}
Because $L$ is convex in $x$ for every $\sigma\in\ D^+$  and in particular for $\bar\sigma$, we have: 
$$
L(\bar{x},\bar\sigma)-L({x},\bar\sigma)\le \langle s,(\bar{x}-x)\rangle \quad \forall x\in C, \forall s\in \partial_x L(\bar{x},\bar\sigma),
$$
and from the assumption that $(\bar{x},\bar\sigma)$ is a stationary point of $L$, we have $0\in \partial_x L(\bar{x},\bar\sigma)$, therefore:
$$
L(\bar{x},\bar\sigma)\le L({x},\bar\sigma) \quad \forall x\in C.
$$
With the same reasoning, considering the concavity of $L$ in $\sigma$ for every $x\in C$, we have:
$$
L(\bar{x}, \sigma) \le L(\bar{x}, \bar\sigma),
$$
that proves that $(\bar{x},\bar\sigma)$ is a saddle point of $L$.

For the global optimality conditions, Theorem~\ref{the: corr} ensures that
\[
f(\bar{x}) = L(\bar{x}, \bar\sigma).
\]
Moreover, for all \( x \in C \), the Fenchel inequality implies
\[
L(x, \bar\sigma) = \langle \Phi(x), \bar\sigma \rangle - V^*(\bar\sigma) \le V(\Phi(x)) = f(x).
\]
Combining the two, we obtain
\[
f(\bar{x}) = L(\bar{x}, \bar\sigma) \le L(x, \bar\sigma) \le f(x), \quad \forall x \in C,
\]
which proves that \( \bar{x} \) is a global minimizer of \( f \) over \( C \).
\end{proof}

\bibliographystyle{plain}
\bibliography{scholar}

\begin{thebibliography}{10}

\bibitem{bente96}
Aharon Ben-Tal and Marc Teboulle.
\newblock Hidden convexity in some nonconvex quadratically constrained
  quadratic programming.
\newblock {\em Mathematical Programming}, 72(1):51--63, 1996.

\bibitem{ben05}
Michele Benzi, Gene~H Golub, and J{\"o}rg Liesen.
\newblock Numerical solution of saddle point problems.
\newblock {\em Acta numerica}, 14:1--137, 2005.

\bibitem{bvcon}
Stephen Boyd and Lieven Vandenberghe.
\newblock {\em Convex optimization}.
\newblock Cambridge university press, Cambridge, UK, 2004.

\bibitem{bri61}
Louis Brickman.
\newblock On the field of values of a matrix.
\newblock {\em Proceedings of the American Mathematical Society}, 12(1):61--66,
  1961.

\bibitem{dek}
Etienne De~Klerk.
\newblock {\em Aspects of semidefinite programming: interior point algorithms
  and selected applications}, volume~65.
\newblock Springer Science \& Business Media, Berlin, Germany, 2006.

\bibitem{eke}
Ivar Ekeland and Roger Temam.
\newblock {\em Convex analysis and variational problems}.
\newblock SIAM, Philadelphia, PA, 1999.

\bibitem{cvx}
Michael Grant and Stephen Boyd.
\newblock Cvx: Matlab software for disciplined convex programming, version 2.1,
  2014.

\bibitem{gri86}
Luigi Grippo, Francesco Lampariello, and Stefano Lucidi.
\newblock A nonmonotone line search technique for newton’s method.
\newblock {\em SIAM journal on Numerical Analysis}, 23(4):707--716, 1986.

\bibitem{hes68}
Magnus~R Hestenes.
\newblock Pairs of quadratic forms.
\newblock {\em Linear Algebra and Its Applications}, 1(3):397--407, 1968.

\bibitem{horst84}
Reiner Horst.
\newblock On the convexification of nonlinear programming problems: An
  applications-oriented survey.
\newblock {\em European Journal of Operational Research}, 15(3):382--392, 1984.

\bibitem{lg14}
V.~Latorre and David~Yang Gao.
\newblock Canonical dual solutions to nonconvex radial basis neural network
  optimization problem.
\newblock {\em Neurocomputing}, 134:189--197, 2014.

\bibitem{lg15}
V.~Latorre and David~Yang Gao.
\newblock Canonical duality for solving general nonconvex constrained problems.
\newblock {\em Optimization Letters}, 10(8):1763--1779, 2016.

\bibitem{lat-gao-chaos}
Vittorio Latorre and David~Yang Gao.
\newblock Global optimal trajectory in chaos and np-hardness.
\newblock {\em International Journal of Bifurcation and Chaos}, 26(08):1650142,
  2016.

\bibitem{snl19}
Vittorio Latorre and David~Yang Gao.
\newblock Efficient deterministic algorithm for huge-sized noisy sensor
  localization problems via canonical duality theory.
\newblock {\em IEEE transactions on cybernetics}, 2019.

\bibitem{ls14}
Vittorio Latorre and Simone Sagratella.
\newblock A canonical duality approach for the solution of affine
  quasi-variational inequalities.
\newblock {\em J. Global Optimization}, pages 1--17, 2014.

\bibitem{mon99}
Renato~DC Monteiro and Jong-Shi Pang.
\newblock A potential reduction newton method for constrained equations.
\newblock {\em SIAM Journal on Optimization}, 9(3):729--754, 1999.

\bibitem{nest03}
Yurii Nesterov.
\newblock Random walk in a simplex and quadratic optimization over convex
  polytopes.
\newblock Technical report, CORE, 2003.

\bibitem{rock_con}
R~Tyrrell Rockafellar.
\newblock {\em Conjugate duality and optimization}.
\newblock SIAM, Philadelphia, PA, 1974.

\bibitem{rock09}
R~Tyrrell Rockafellar and Roger J-B Wets.
\newblock {\em Variational analysis}, volume 317.
\newblock Springer Science \& Business Media, Berlin, Germany, 2009.

\bibitem{rock}
Ralph~Tyrell Rockafellar.
\newblock {\em Convex analysis}.
\newblock Princeton university press, Princeton, NJ, 2015.

\bibitem{sed}
Jos~F. Sturm.
\newblock Using sedumi 1.02, a {MATLAB} toolbox for optimization over symmetric
  cones.
\newblock {\em Optimization Methods and Software}, 11(1-4):625--653, 1999.

\bibitem{sdpt1}
Kim-Chuan Toh, Michael~J Todd, and Reha~H T{\"u}t{\"u}nc{\"u}.
\newblock Sdpt3—a matlab software package for semidefinite programming,
  version 1.3.
\newblock {\em Optimization methods and software}, 11(1-4):545--581, 1999.

\bibitem{sdpt2}
Reha~H T{\"u}t{\"u}nc{\"u}, Kim-Chuan Toh, and Michael~J Todd.
\newblock Solving semidefinite-quadratic-linear programs using sdpt3.
\newblock {\em Mathematical programming}, 95:189--217, 2003.

\bibitem{volle24}
Michel Volle and Moussa Barro.
\newblock General formulation of the s-procedure.
\newblock {\em JOURNAL OF CONVEX ANALYSIS}, 31(4):1289--1304, 2024.

\bibitem{wright_book}
Stephen~J Wright.
\newblock {\em Primal-dual interior-point methods}.
\newblock SIAM, Philadelphia, PA, 1997.

\bibitem{hcsur}
Yong Xia.
\newblock A survey of hidden convex optimization.
\newblock {\em Journal of the Operations Research Society of China},
  8(1):1--28, 2020.

\end{thebibliography}

\end{document}